\newcommand{\ch}[1]{{\mbox{\raise 1pt\hbox{\Large$\chi$}}}_{\lower 1pt\hbox{$\scriptstyle #1$}}}
\def\1{\raisebox{2pt}{\rm{$\chi$}}}
\newtheorem{theorem}{Theorem}[section]
\newtheorem{lemma}[theorem]{Lemma}
\theoremstyle{definition}
\newtheorem{definition}[theorem]{Definition}
\theoremstyle{definition}
\newcommand{\R}{{\mathbb R}}
\newcommand{\N}{{\mathbb N}}
\newcommand{\Z}{{\mathbb Z}}
\newcommand\diam{\operatorname{diam}}
\DeclareMathOperator{\ucodima}{\overline{co\;dim}_A}
\DeclareMathOperator{\lcodima}{\underline{co\;dim}_A}
\def\cprime{$'$}
\gdef\eeaa#1pt{#1}}      
\def\accentadjtext#1{\setbox0\hbox{$#1$}\kern   
                \expandafter\eeaa\the\fontdimen1\textfont1 \ht0 }
\def\accentadjscript#1{\setbox0\hbox{$#1$}\kern 
                \expandafter\eeaa\the\fontdimen1\scriptfont1 \ht0 }
\def\accentadjscriptscript#1{\setbox0\hbox{$#1$}\kern   
                \expandafter\eeaa\the\fontdimen1\scriptscriptfont1 \ht0 }
\def\accentadjtextback#1{\setbox0\hbox{$#1$}\kern       
                -\expandafter\eeaa\the\fontdimen1\textfont1 \ht0 }
\def\accentadjscriptback#1{\setbox0\hbox{$#1$}\kern     
                -\expandafter\eeaa\the\fontdimen1\scriptfont1 \ht0 }
\def\accentadjscriptscriptback#1{\setbox0\hbox{$#1$}\kern 
                -\expandafter\eeaa\the\fontdimen1\scriptscriptfont1 \ht0 }
\def\itoverline#1{{\mathsurround0pt\mathchoice
        {\rlap{$\accentadjtext{\displaystyle #1}
                \accentadjtext{\vrule height1.593pt}
                \overline{\phantom{\displaystyle #1}
                \accentadjtextback{\displaystyle #1}}$}{#1}}
        {\rlap{$\accentadjtext{\textstyle #1}
                \accentadjtext{\vrule height1.593pt}
                \overline{\phantom{\textstyle #1}
                \accentadjtextback{\textstyle #1}}$}{#1}}
        {\rlap{$\accentadjscript{\scriptstyle #1}
                \accentadjscript{\vrule height1.593pt}
                \overline{\phantom{\scriptstyle #1}
                \accentadjscriptback{\scriptstyle #1}}$}{#1}}
        {\rlap{$\accentadjscriptscript{\scriptscriptstyle #1}
                \accentadjscriptscript{\vrule height1.593pt}
                \overline{\phantom{\scriptscriptstyle #1}
                \accentadjscriptscriptback{\scriptscriptstyle #1}}$}{#1}}}}
\newcommand{\iol}{\itoverline}
\def\1{\raisebox{2pt}{\rm{$\chi$}}}
\newcommand{\Lip}{\operatorname{Lip}}
\def\vint_#1{\mathchoice%
        {\mathop{\kern 0.2em\vrule width 0.6em height 0.69678ex depth -0.58065ex
                \kern -0.8em \intop}\nolimits_{\kern -0.4em#1}}%
        {\mathop{\kern 0.1em\vrule width 0.5em height 0.69678ex depth -0.60387ex
                \kern -0.6em \intop}\nolimits_{#1}}%
        {\mathop{\kern 0.1em\vrule width 0.5em height 0.69678ex
            depth -0.60387ex
                \kern -0.6em \intop}\nolimits_{#1}}%
        {\mathop{\kern 0.1em\vrule width 0.5em height 0.69678ex depth -0.60387ex
                \kern -0.6em \intop}\nolimits_{#1}}}
\def\vintslides_#1{\mathchoice%
        {\mathop{\kern 0.1em\vrule width 0.5em height 0.697ex depth -0.581ex
                \kern -0.6em \intop}\nolimits_{\kern -0.4em#1}}%
        {\mathop{\kern 0.1em\vrule width 0.3em height 0.697ex depth -0.604ex
                \kern -0.4em \intop}\nolimits_{#1}}%
        {\mathop{\kern 0.1em\vrule width 0.3em height 0.697ex depth -0.604ex
                \kern -0.4em \intop}\nolimits_{#1}}%
        {\mathop{\kern 0.1em\vrule width 0.3em height 0.697ex depth -0.604ex
                \kern -0.4em \intop}\nolimits_{#1}}}
\newcommand{\dist}{\operatorname{dist}}
\title[Characterization of the upper Assouad codimension]{Characterization of the upper Assouad codimension}
\author[J. Kline]{Josh Kline}
\address[J.K.]{Department of Mathematical Sciences, P.O. Box 210025, University of Cincinnati, Cincinnati, OH 45221-0025, U.S.A}
 \email{klinejp@mail.uc.edu}
\author[A. V. V\"ah\"akangas]{Antti V. V\"ah\"akangas}
\address[A.V.V.]{Department of Mathematics and Statistics, P.O. Box 35, FI-40014 University of Jyvaskyla, Finland}
 \email{antti.vahakangas@iki.fi}
\keywords{Assouad codimension, Aikawa codimension, local fractional Hardy inequality}
\subjclass[2020]{Primary 28A75; Secondary 26D10, 46E35}
\date{\today}
\begin{document}

\begin{abstract}
We present a new notion, the upper Aikawa codimension, and establish its equivalence with the upper Assouad codimension in a metric space  with a doubling measure. To achieve this result, we first prove variant of a local fractional Hardy inequality.
\end{abstract}

\maketitle

\section{Introduction}

The focus of this note is a variation of the Assouad dimension, specifically, the upper Assouad codimension. Alongside its dual counterpart, known as the lower Assouad codimension, these concepts were introduced in  \cite{KaenmakiLehrbackVuorinen2013} for a metric space with a doubling measure. Their applications extend for instance to Hardy inequalities \cite{MR3237044,MR3631460}, with distance measured to an irregular set, and density properties of compactly supported smooth functions in fractional Sobolev spaces \cite{MR4454384}. For more comprehensive treatments of Assouad dimensions, additional information can be found in \cite{MR4237248} and the monographs \cite{MR4411274,MR4306765}.

Another notion of dimension, defined through the integral condition
\[
\vint_{B(z,R)} \frac{1}{\dist(y,E)^{\alpha}}\,dy\le CR^{-\alpha}
\]
 was employed by Aikawa in \cite{MR1143471} to investigate quasiadditivity properties of Riesz capacity; we also refer to \cite{MR1439503}.
The following result from \cite[Theorem 5.1]{MR3055588} establishes
a connection between these notions of dimensions:
The lower Assouad codimension (refer to Definition~\ref{d.lowerassouad}) of a set $E\subset X$ is the supremum of those $\alpha\ge 0$ for which $E$ satisfies the lower Aikawa condition (Definition~\ref{d.loweraikawa}). 
Consequently, the lower Assouad codimension $\lcodima(E)$ can be referred to as the lower Aikawa codimension of $E$.

The aim of this note is to prove an analogous characterization for the upper Assouad codimension (see Definition \ref{d.upperassouad}), which is the dual concept of the lower Assouad codimension. More specifically, when $E\subset X$, we prove that $\ucodima(E)$ is the infimum of all $\alpha>0$ for which $E$ satisfies a novel upper Aikawa condition (Definition~\ref{d.upperaikawa}). 
This main result is  formulated in Theorem~\ref{t.main}, and to our knowledge, it is new even in $\R^n$ equipped with the Euclidean distance and the Lebesgue measure. 

The main challenge concerning the proof of Theorem~\ref{t.main} lies in establishing  the inequality $\ucodima(E)\le \inf A$; here, $A$ denotes the set of exponents $\alpha>0$ for which $E$ satisfies the upper Aikawa condition. 
This inequality is deduced from a local Hardy inequality, the fractional version of which we prove  under the assumption that $E$ satisfies the upper Aikawa condition with exponent $\alpha$ (refer to Section \ref{e.localH}). 
In Section \ref{e.truncation} we lay the groundwork for the local Hardy inequality by employing a truncation technique from \cite{MR946438} on the set $E$. We also utilize the fractional variant \cite{MR3331699} of the Maz'ya truncation method \cite[p. 110]{Mazya1985} in the proof. 

\subsection*{Acknowledgements}

We appreciate the utilization of ChatGPT version 3.5 in the final preparation of this note.

\section{Notation and Preliminaries}\label{s.notation}

We assume throughout this paper that $X=(X,d,\mu)$ is 
a metric measure space (with at least two points),  
where
$\mu$ is a Borel measure supported on $X$ such that 
$0<\mu(B)<\infty$ for
all (open) balls 
\[B=B(x,r):= \{y\in X : d(x,y)< r\}\]
with $x\in X$ and $r>0$. 
We also assume throughout
that $\mu$ is \emph{doubling}; that is, there is a constant $c_\mu >  1$ such that 
whenever $x\in X$ and $r>0$, we have
\begin{equation}\label{e.doubling}
  \mu(B(x,2r))\le c_\mu\, \mu(B(x,r)).
\end{equation}
By iterating the doubling condition \eqref{e.doubling}, we can find constants $s=\log_2 c_\mu>0$ and $C>0$ such that
\begin{equation}\label{eq:QDoublingDim}
\frac{\mu(B(x,r))}{\mu(B(x,R))}\ge C\Bigl(\frac rR\Bigr)^s
\end{equation}
whenever $x\in X$ and $0<r\le R<\diam(X)$; see \cite[Lemma~3.3]{MR2867756}.

The closure of a set $E\subset X$ is denoted by $\iol{E}$.
We use $\dist(x,E)=\inf\{d(x,y)\,:\,y\in E\}$ to denote the distance from a point $x\in X$ to a nonempty set $E\subset X$.
If $E\subset X$, then $\mathbf{1}_{E}$
denotes the characteristic function of $E$; that is, $\mathbf{1}_{E}(x)=1$ if $x\in E$
and $\mathbf{1}_{E}(x)=0$ if $x\in X\setminus E$.
We abbreviate $dy=d\mu(y)$ in the integrals and 
use the following familiar notation \[
u_A=\vint_{A} u(y)\,dy=\frac{1}{\mu(A)}\int_A u(y)\,d\mu(y)
\]
for the integral average of $u\in L^1(A)$ over a measurable set $A\subset X$
with $0<\mu(A)<\infty$.
 The set of all Lipschitz functions $u:X\to \R$ is denoted by $\Lip(X)$. 
We denote $\N=\{1,2,3,\ldots\}$ and $\N_0=\N\cup\{0\}$.

\subsection{Lower Assouad codimension and lower Aikawa condition}

In order to motivate and provide context for our novel
upper Aikawa condition in Section \ref{s.upper}, we first recall 
the lower  Assouad codimension 
 \cite{KaenmakiLehrbackVuorinen2013} and its
 relation to the lower Aikawa condition  \cite{MR3055588}.
 
  When $E\subset X$ and $r>0$, the
open $r$-neighbourhood of $E$
is the set 
\[E_r=\{x\in X \,:\, \dist(x,E)<r\}.\]
Observe that $E_r=\iol{E}_r$.  That is, the
$r$-neighbourhoods of $E$ and its closure coincide.

\begin{definition}\label{d.lowerassouad}
The \emph{lower Assouad codimension} of a nonempty set $E\subset X$,
denoted by $\lcodima(E)$, is
the supremum of all $Q\ge 0$ for
which there exists a constant $C>0$ such that
\begin{equation}\label{e.lascd}
\frac{\mu(E_r\cap B(z,R))}{\mu(B(z,R))}\le C\Bigl(\frac{r}{R}\Bigr)^Q
\end{equation}
for every $z\in E$ and all $0<r<R<\diam(E)$. 
If $\diam(E)=0$, then
the restriction $R<\diam(E)$ is replaced
with $R<\diam(X)$.
\end{definition}

\begin{definition}\label{d.loweraikawa}
Suppose $E\subset X$ is a nonempty set and $\alpha\ge 0$. Assume that there exists a constant $C>0$
such that
\begin{equation}\label{e.aikawa}
\vint_{B(z,R)} \frac{1}{\dist(y,E)^{\alpha}}\,dy\le CR^{-\alpha}
\end{equation}
for every $z\in E$ and $0<R<\diam(E)$. If $\diam(E)=0$, then the
restriction $R<\diam(E)$ is replaced with $R<\diam(X)$. 
Then we say that $E$ satisfies the \emph{lower Aikawa condition with exponent $\alpha$. }
\end{definition}

Observe that \eqref{e.aikawa} always holds for $\alpha=0$.
The following result is \cite[Theorem 5.1]{MR3055588}.

\begin{theorem}\label{e.basic}
Let $E\subset X$ be a nonempty set. Then $\lcodima(E)$
is the supremum of all $\alpha\ge 0$ for which $E$ satisfies
the lower Aikawa condition with exponent $\alpha$.
\end{theorem}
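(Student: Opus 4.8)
The plan is to establish the two inequalities $\sup A\le\lcodima(E)$ and $\lcodima(E)\le\sup A$ separately, where $A$ denotes the set of all $\alpha\ge 0$ for which $E$ satisfies the lower Aikawa condition with exponent $\alpha$; recall that $0\in A$ because \eqref{e.aikawa} holds trivially when $\alpha=0$, so that $\sup A$ is well defined.

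For $\sup A\le\lcodima(E)$ I would fix $\alpha\in A$ with $\alpha>0$ and use the pointwise bound $\dist(y,E)^{-\alpha}\ge r^{-\alpha}$, valid for $y\in E_r$. Integrating this over $E_r\cap B(z,R)$ and applying \eqref{e.aikawa} yields
\[
r^{-\alpha}\,\frac{\mu(E_r\cap B(z,R))}{\mu(B(z,R))}\le\vint_{B(z,R)}\frac{\mathbf{1}_{E_r}(y)}{\dist(y,E)^{\alpha}}\,dy\le CR^{-\alpha},
\]
which is \eqref{e.lascd} with $Q=\alpha$, so that $\alpha\le\lcodima(E)$; taking the supremum over $\alpha\in A$ then finishes this half. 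If $\diam(E)=0$, one replaces $\diam(E)$ by $\diam(X)$ throughout. This direction is essentially immediate.

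The substance lies in $\lcodima(E)\le\sup A$, which I may assume is nontrivial, namely $\lcodima(E)>0$. Fixing $\alpha\in(0,\lcodima(E))$, I would choose $Q'\in(\alpha,\lcodima(E))$ and a constant $C>0$ for which \eqref{e.lascd} holds with exponent $Q'$. Letting $r\to0^+$ in \eqref{e.lascd} also shows that $\mu(\iol{E}\cap B(z,R))=0$, so $\dist(\cdot,E)^{-\alpha}$ is finite $\mu$-almost everywhere on balls centred on $E$ and the computation below is meaningful. The key device is the Cavalieri (layer-cake) formula for $y\mapsto\dist(y,E)^{-1}$, which after the substitution $s=1/t$ yields, for $z\in E$ and $0<R<\diam(E)$,
\[
\int_{B(z,R)}\frac{dy}{\dist(y,E)^{\alpha}}=\alpha\int_0^\infty s^{-1-\alpha}\,\mu(E_s\cap B(z,R))\,ds.
\]
I would then split this integral at $s=R$. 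On $(0,R)$ one has $s<R<\diam(E)$, so \eqref{e.lascd} bounds $\mu(E_s\cap B(z,R))$ by $C(s/R)^{Q'}\mu(B(z,R))$, and $\int_0^R s^{Q'-\alpha-1}\,ds$ converges precisely because $Q'>\alpha$; on $(R,\infty)$ the trivial bound $\mu(E_s\cap B(z,R))\le\mu(B(z,R))$ together with $\int_R^\infty s^{-1-\alpha}\,ds<\infty$ suffices. Summing the two contributions produces $\vint_{B(z,R)}\dist(y,E)^{-\alpha}\,dy\le C'R^{-\alpha}$ with $C'$ depending only on $\alpha$, $Q'$ and $C$, hence $\alpha\in A$. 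Letting $\alpha\uparrow\lcodima(E)$ then gives $\lcodima(E)\le\sup A$; the case $\diam(E)=0$ proceeds identically with $\diam(X)$ in place of $\diam(E)$.

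I do not anticipate a serious obstacle: the entire argument reduces to the layer-cake splitting and two elementary one-variable integrals. The single point worth flagging is that the small-scale piece converges only for exponents strictly below $\lcodima(E)$, which is exactly why the characterization takes the form of a supremum and, in general, not a maximum.
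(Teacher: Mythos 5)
The paper does not actually prove Theorem~\ref{e.basic}; it is cited directly from \cite[Theorem~5.1]{MR3055588}, so there is no internal proof to compare against. Your argument is correct and self-contained: the easy direction by integrating the pointwise bound $\dist(\cdot,E)^{-\alpha}\ge r^{-\alpha}$ over $E_r\cap B(z,R)$, and the substantive direction by the Cavalieri formula split at $s=R$, where the small-scale piece converges precisely because $Q'>\alpha$ and the large-scale tail is handled by the trivial bound $\mu(E_s\cap B(z,R))\le\mu(B(z,R))$. Two small remarks. First, the choice of $Q'\in(\alpha,\lcodima(E))$ does require the (easy but worth a sentence) observation that the set of admissible exponents $Q$ in Definition~\ref{d.lowerassouad} is downward closed, since $(r/R)^Q\le(r/R)^{Q'}$ whenever $Q\ge Q'$ and $0<r<R$; that is what guarantees \eqref{e.lascd} actually holds for your chosen $Q'$ and not merely for a sequence approaching $\lcodima(E)$. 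Second, the aside about $\mu(\iol{E}\cap B(z,R))=0$ is fine but unnecessary: the layer-cake identity holds for nonnegative extended-real-valued integrands without any finiteness hypothesis, and the finiteness you are after is precisely the conclusion of the estimate. The literature typically does the same computation via a dyadic annular decomposition of $B(z,R)$ into the sets $(E_{2^{-j}R}\setminus E_{2^{-j-1}R})\cap B(z,R)$, which reduces the convergence question to a geometric series $\sum_j 2^{j(\alpha-Q')}$; your continuous Cavalieri version is an exact analogue and buys nothing different, so the choice is purely cosmetic.
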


Our  definition of the upper Aikawa condition, see  Definition \ref{d.upperaikawa} below, is
motivated by  condition (B) of the following Theorem~\ref{t.self_aikawa}.

We recall that a set $E\subset\R^n$ is \emph{porous}\index{porous set} if there exists a constant $C>0$ such that
for every $x\in\R^n$ and $r>0$ there exists $y\in\R^n$ satisfying $B(y,Cr)\subset B(x,r)\setminus E$.

\begin{theorem}\label{t.self_aikawa}
Consider $X=\R^n$ equipped with the
Euclidean distance and the Lebesgue measure denoted by $\mu$.
Let $0\le \alpha<n$ and $E\subset\R^n$
be a nonempty set. Let us consider the following conditions:
\begin{itemize}
\item[(A)]
$E$ satisfies
the lower Aikawa condition with exponent $\alpha$.
\item[(B)] For every $\varepsilon>0$, there exists $\delta>0$ such that
\[
\vint_{B(z,R)} \frac{\mathbf{1}_K(y)}{\dist(y,E)^\alpha}\,dy \le \varepsilon R^{-\alpha}
\]
whenever $z\in E$, $R>0$ and $K\subset\R^n$
is a Lebesgue measurable set such that \[
\mu(K\cap B(z,R))\le \delta \mu(B(z,R))\,.\] 
\end{itemize}
Condition  (A) implies  (B). Conversely, if $E$ is porous and $\dist(\cdot,E)^{-\alpha}$ is locally integrable, then
 (B) implies  (A).
\end{theorem}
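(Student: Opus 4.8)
The plan is to establish the two implications separately. For $\alpha=0$ both (A) and (B) hold trivially, so assume $0<\alpha<n$ throughout, and begin with the easier implication $(\mathrm B)\Rightarrow(\mathrm A)$. Since $E\subset\R^n$ is porous, $\lcodima(E)>0$ (a standard consequence of porosity; see \cite{KaenmakiLehrbackVuorinen2013}), so by Definition~\ref{d.lowerassouad} there are $C_0>0$ and $\eta>0$ with $\mu(E_r\cap B(z,R))\le C_0(r/R)^{\eta}\mu(B(z,R))$ whenever $z\in E$ and $0<r<R<\diam(E)$. Fix $\varepsilon=1$ in $(\mathrm B)$, let $\delta>0$ be the number it provides, and pick once and for all $\rho\in(0,1)$ so small that $C_0\rho^{\eta}\le\delta$. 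For $z\in E$ and $0<R<\diam(E)$ I split
\[
\vint_{B(z,R)}\frac{1}{\dist(y,E)^{\alpha}}\,dy
=\vint_{B(z,R)}\frac{\mathbf{1}_{E_{\rho R}}(y)}{\dist(y,E)^{\alpha}}\,dy
+\vint_{B(z,R)}\frac{\mathbf{1}_{B(z,R)\setminus E_{\rho R}}(y)}{\dist(y,E)^{\alpha}}\,dy .
\]
On $B(z,R)\setminus E_{\rho R}$ one has $\dist(y,E)\ge\rho R$, so the second term is at most $\rho^{-\alpha}R^{-\alpha}$; for the first term, the set $K:=E_{\rho R}$ satisfies $\mu(K\cap B(z,R))\le C_0\rho^{\eta}\mu(B(z,R))\le\delta\mu(B(z,R))$, whence $(\mathrm B)$ bounds it by $R^{-\alpha}$. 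Adding, $E$ satisfies \eqref{e.aikawa} with exponent $\alpha$ and constant $\rho^{-\alpha}+1$; the local integrability hypothesis guarantees the integrals are finite, and the case $\diam(E)=0$ is handled verbatim with $\diam(X)$ in place of $\diam(E)$.

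For $(\mathrm A)\Rightarrow(\mathrm B)$ the mechanism is different. The contribution to $\vint_{B(z,R)}\mathbf 1_K\dist(\cdot,E)^{-\alpha}$ of the part of $K$ at distance $\ge cR$ from $E$ (any fixed $c>0$) is harmless — it is $\lesssim c^{-\alpha}\delta R^{-\alpha}$ — so the difficulty is entirely the part of $K$ clustering near $E$; because \eqref{e.aikawa} is scale invariant, a compactness argument does not seem to close the gap, and the essential input is a self-improvement property: \emph{if $0<\alpha<n$ and $E$ satisfies \eqref{e.aikawa} with exponent $\alpha$, then it satisfies \eqref{e.aikawa} with exponent $\alpha'$ for some $\alpha'\in(\alpha,n)$}. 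I would obtain this as follows. First, \eqref{e.aikawa} self-extends from $R<\diam(E)$ to all $R>0$ by decomposing $B(z,R)$ into dyadic distance-annuli around $E$ and covering the ones close to $E$ by finitely many balls centered at $E$ of radius $<\diam(E)$. Then, for $0<\alpha<n$, condition \eqref{e.aikawa} says exactly that the weight $w:=\dist(\cdot,E)^{-\alpha}$ satisfies the Muckenhoupt condition $A_1$ on $\R^n$, i.e.\ $Mw\lesssim w$ almost everywhere, where $M$ is the Hardy--Littlewood maximal operator: one estimates $\vint_{B(x,r)}w$ separately for $r$ small compared with $\dist(x,E)$ (where $w$ is locally comparable with $w(x)$) and for $r\gtrsim\dist(x,E)$ (where $B(x,r)$ is enlarged to a ball centered at a nearest point of $E$ and \eqref{e.aikawa} is applied). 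As $A_1\subset A_\infty$, the weight $w$ obeys a reverse Hölder inequality $\vint_{B}w^{q}\le C\bigl(\vint_{B}w\bigr)^{q}$ for some $q>1$; applied to $B=B(z,R)$ with $z\in E$ and combined with \eqref{e.aikawa} this gives \eqref{e.aikawa} for the exponent $\alpha'=\alpha q$, where we shrink $q$ if necessary so that $\alpha q<n$. (This self-improvement is also available in the literature on the Aikawa and Assouad codimensions.)

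Granting the improved exponent $\alpha'$, the implication $(\mathrm A)\Rightarrow(\mathrm B)$ follows from Hölder's inequality with $p=\alpha'/\alpha$: if $\mu(K\cap B(z,R))\le\delta\mu(B(z,R))$ then
\[
\int_{K\cap B(z,R)}\frac{dy}{\dist(y,E)^{\alpha}}
\le\Bigl(\int_{B(z,R)}\frac{dy}{\dist(y,E)^{\alpha'}}\Bigr)^{1/p}\,\mu(K\cap B(z,R))^{1-1/p}
\lesssim\delta^{(\alpha'-\alpha)/\alpha'}\,R^{-\alpha}\mu(B(z,R)),
\]
by \eqref{e.aikawa} for the exponent $\alpha'$, so that $\vint_{B(z,R)}\mathbf{1}_K\dist(\cdot,E)^{-\alpha}\lesssim\delta^{(\alpha'-\alpha)/\alpha'}R^{-\alpha}$ and it suffices to take $\delta$ small; using the self-extension of \eqref{e.aikawa} noted above together with the trivial lower bound $\int_{B(z,R)}\dist(\cdot,E)^{-\alpha}\,dy\ge R^{-\alpha}\mu(B(z,R))$, this in fact holds for every $R>0$, and recovers $(\mathrm B)$ as the quantitative absolute continuity characterization of $A_\infty$ specialized to balls centered at $E$. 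The main obstacle is the self-improvement step — carrying out the $A_1$/reverse Hölder argument uniformly over all balls centered at $E$ and all radii, with the exponent $\alpha'$ and every constant depending only on $\alpha$, $n$, and the constant in \eqref{e.aikawa}; everything else is bookkeeping with the doubling property, Hölder's inequality, and the neighbourhood estimate furnished by porosity.
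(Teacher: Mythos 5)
Your proof is correct. The (A)$\Rightarrow$(B) direction is essentially the same route as the paper's: both pass through $A_\infty$ theory for the weight $w=\dist(\cdot,E)^{-\alpha}$. The paper cites \cite[Theorem~10.9]{MR4306765} for the implication Aikawa~$\Rightarrow w\in A_\infty$ and then applies the quantitative absolute-continuity characterization of $A_\infty$; you make the $A_1$ verification explicit and repackage the conclusion as reverse H\"older self-improvement of the Aikawa exponent followed by H\"older's inequality, which is the same estimate in different clothing.

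The (B)$\Rightarrow$(A) direction, by contrast, is genuinely different and more elementary than the paper's. The paper first proves $w\in A_\infty$ by checking Muckenhoupt's absolute-continuity criterion for \emph{arbitrary} balls $B(x,r)$ via a two-case split on $\dist(x,E)$ versus $r$, and then invokes \cite[Theorem~10.15]{MR4306765} (porosity together with $w\in A_\infty$ implies the Aikawa condition). You bypass the $A_\infty$ machinery entirely: porosity gives $\lcodima(E)>0$, hence a tube estimate $\mu(E_{\rho R}\cap B(z,R))\le C_0\rho^\eta\mu(B(z,R))$; you then fix $\varepsilon=1$ in (B), take the corresponding $\delta$, choose $\rho$ with $C_0\rho^\eta\le\delta$, and split $\vint_{B(z,R)}\dist(\cdot,E)^{-\alpha}$ into the contribution of the tube $E_{\rho R}$ (controlled by (B) applied to $K=E_{\rho R}$) and the contribution of its complement (where $\dist(\cdot,E)\ge\rho R$), obtaining \eqref{e.aikawa} with constant $1+\rho^{-\alpha}$. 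What your route buys: you only ever work with balls centered on $E$, you never need the $A_\infty$ theory, and the local-integrability hypothesis becomes essentially superfluous because the split itself bounds the integral. What it costs: you import the fact that porosity in $\R^n$ forces $\lcodima(E)>0$, a fact the paper has folded into its citation of \cite[Theorem~10.15]{MR4306765} in any case.
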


\begin{proof}
Let $w(y)=\dist(y,E)^{-\alpha}$ for every $y\in\R^n$.
Let us assume that condition (A) holds and fix $\varepsilon>0$.
By \cite[Lemma 10.4]{MR4306765}, we may 
further assume that \eqref{e.aikawa} holds for all $R>0$
with a possibly different constant $C_1>0$.
Then from \cite[Theorem 10.9]{MR4306765}, we
conclude that $w$ belongs to the Muckenhoupt class $A_\infty$. By applying one of the well-known characterizations  of $A_\infty$ weights \cite{Muckenhoupt1974}, we infer that there
exists $\delta>0$ as follows.
Fix $z\in E$, $R>0$ and a Lebesgue measurable set $K\subset \R^n$
such that $\mu(K\cap B(z,R))\le \delta \mu(B(z,R))$.
Then
\begin{align*}
\int_{B(z,R)} \frac{\mathbf{1}_K(y)}{\dist(y,E)^\alpha}\,dy&=\int_{K\cap B(z,R)} w(y)\,dy\le \frac{\varepsilon}{C_1} \int_{B(z,R)} w(y)\,dy\\&=\frac{\varepsilon}{C_1} \int_{B(z,R)} \frac{1}{\dist(y,E)^\alpha}\,dy\le \varepsilon R^{-\alpha} \mu(B(z,R))\,,
\end{align*}
where the last step follows from \eqref{e.aikawa}.
Condition (B) follows from this.

Conversely, assume that  (B) holds, the set $E$ is porous and $w$ is locally integrable. Denote $w(A)=\int_A w(y)\,dy$ for  measurable sets $A\subset \R^n$.
We  show that $w$ belongs to Muckenhoupt $A_\infty$ class of weights. Since $E$ is porous, condition (A) then follows by applying \cite[Theorem 10.15]{MR4306765}.
For this purpose, we let $\varepsilon>0$.
By   \cite{Muckenhoupt1974}, it suffices to find a $\delta>0$ such that
$w(K)\le \varepsilon w(B(x,r))$ whenever
$B(x,r)\subset \R^n$ is a ball and $K\subset B(x,r)$
is a measurable set such that $\mu(K)\le \delta\mu(B(x,r))$.
We  choose $\delta>0$ such that
condition (B) holds for the pair $\varepsilon/4^n>0$ and $\delta>0$.
Clearly, we may further assume that $4^\alpha\delta<\varepsilon$.

First we assume that $2r\le \dist(x,E)$. Observe
that $\dist(x,E)/2\le \dist(y,E)\le 2\dist(x,E)$ for every $y\in B(x,r)$, and therefore
\begin{equation}\label{e.case1}
\begin{split}
\int_{B(x,r)} \frac{\mathbf{1}_K(y)}{\dist(y,E)^\alpha}\,dy
&\le 2^\alpha\dist(x,E)^{-\alpha}\mu(K)\\&\le  (2\dist(x,E))^{-\alpha} 4^\alpha\delta \mu(B(x,r))
\le {\varepsilon}\int_{B(x,r)} \frac{1}{\dist(y,E)^\alpha}\,dy\,.
\end{split}
\end{equation}

Next we consider the case, where $2r> \dist(x,E)$. Write $R=2r$ and
choose $z\in E$ such that $x\in B(z,R)$. Observe
that $B(x,r)\subset B(z,2R)$ and 
\[
\mu(K\cap B(z,2R))\le \mu(K)\le \delta\mu(B(x,r))\le \delta \mu(B(z,2R))\,. 
\]
Condition (B) and the fact that $\mu$ is the Lebesgue measure imply that
\begin{equation}\label{e.case2}
\begin{split}
\int_{B(x,r)} \frac{\mathbf{1}_K(y)}{\dist(y,E)^\alpha}\,dy&\le \int_{B(z,2R)} \frac{\mathbf{1}_K(y)}{\dist(y,E)^\alpha}\,dy \le \frac{\varepsilon}{4^n} (2R)^{-\alpha}\mu(B(z,2R))\\
&\le \varepsilon(3r)^{-\alpha}\mu(B(x,r))\le  {\varepsilon}\int_{B(x,r)} \frac{1}{\dist(y,E)^\alpha}\,dy\,.
\end{split}
\end{equation}

In any case, by combining inequalities \eqref{e.case1} and \eqref{e.case2}, we find that
\[
w(K)=w(K\cap B(x,r))\le 
 \varepsilon w(B(x,r))\,,
\]
as claimed.
\end{proof}

\subsection{Upper Assouad codimension and upper Aikawa condition}\label{s.upper}

We first recall  the  upper  Assouad codimension from
 \cite{KaenmakiLehrbackVuorinen2013}.

 \begin{definition}\label{d.upperassouad}
The \emph{upper Assouad codimension} of a nonempty set $E\subset X$,
denoted by $\ucodima(E)$, is
the infimum of all $Q\ge 0$ for
which there exists a constant $c>0$ such that
\begin{equation}\label{e.uascd}
\frac{\mu(E_r\cap B(z,R))}{\mu(B(z,R))}\ge c\Bigl(\frac{r}{R}\Bigr)^Q
\end{equation}
for every $z\in E$ and all $0<r<R<\diam(E)$. 
If $\diam(E)=0$, then
the restriction $R<\diam(E)$ is replaced
with $R<\diam(X)$.
\end{definition}

Observe from Inequality~\eqref{eq:QDoublingDim}
that $\ucodima(E)\le \log_2 c_\mu$ for every nonempty $E\subset X$.

In this note, we introduce the following concept that we call
an upper Aikawa condition. This terminology is motivated
by Theorem \ref{t.self_aikawa}.

\begin{definition}\label{d.upperaikawa}
Suppose $E\subset X$ is a nonempty set and $\alpha>0$. Assume that, for every $\varepsilon>0$, there exists
$\delta>0$ such that
\[
 \vint_{B(z,R)} \frac{\mathbf{1}_{K}(y)}{\dist(y,E)^{\alpha}}\,dy\ge \frac{R^{-\alpha}}{\varepsilon}
\]
whenever $z\in E$, $0<R<\diam(E)$ and $K\subset X$ is a Borel set that satisfies the condition
\[
\mu(B(z,R)\setminus K)\le\delta \mu(B(z,R))\,.
\]
If $\diam(E)=0$, then the restriction $R<\diam(E)$ is replaced
with $R<\diam(X)$.
Then we say that $E$ satisfies the \emph{upper Aikawa condition with exponent $\alpha$. }
\end{definition}

\section{Truncation of the set}\label{e.truncation}


In order to prove the local Hardy inequality in Section \ref{e.localH}, 
we  need to truncate a given closed set $E$ to a ball in a manner which preserves big pieces of the original set $E$. Therefore, we use
the following method of truncation from \cite{MR946438}, see also
\cite{MR1386213,MR1869615,MR3673660}. 
The proofs of Lemma~\ref{l.truncation} and Lemma~\ref{l.pallot} can be found from \cite{MR3673660}; see also \cite[Section 7.6]{MR4306765} for the case $X=\R^n$.

\begin{lemma}\label{l.truncation}
Assume that $E\subset X$ is a closed set and let $z\in E$ and $r>0$.
Let $F_0=E\cap \iol{B(z,\frac r 2)}$ and define recursively
\[
F_j=\bigcup_{x\in F_{j-1}} E\cap \iol{B(x,2^{-j-1}r)}\,,\qquad \text{ for every }j\in\N\,.
\]
Let $F=\overline{\bigcup_{j\in \N_0} F_j}$.
Then 
$z\in F\subset E\cap\iol{B(z,r)}$, and
$F_{j-1}\subset F_{j}\subset F$ for every $j\in\N$.
\end{lemma}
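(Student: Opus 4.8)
The plan is to verify the three asserted properties of the recursively defined sets $F_j$ and their union $F$, namely the containments $F_{j-1}\subset F_j\subset F\subset E\cap\iol{B(z,r)}$ and the membership $z\in F$. The membership $z\in F$ is immediate: since $z\in E$ and $z\in\iol{B(z,r/2)}$, we have $z\in F_0\subset\bigcup_{j}F_j\subset F$. Likewise $F_0\subset F_1$ follows by taking the ball centered at each $x\in F_0$ in the definition of $F_1$ and observing $x\in E\cap\iol{B(x,2^{-2}r)}$, and the general inclusion $F_{j-1}\subset F_j$ is the same one-line argument applied at level $j$. So the heart of the matter is the inclusion $F\subset E\cap\iol{B(z,r)}$; since $E$ is closed and each $F_j\subset E$ by construction, it suffices to show $F_j\subset \iol{B(z,r)}$ for every $j$, because then $\bigcup_j F_j\subset E\cap\iol{B(z,r)}$ and the closure of the latter set is itself (as a closed set), hence contains $F$.

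First I would prove by induction on $j$ that every $x\in F_j$ satisfies $d(z,x)\le r/2+\sum_{k=1}^{j}2^{-k-1}r$. The base case $j=0$ is $d(z,x)\le r/2$, which holds because $F_0\subset\iol{B(z,r/2)}$. For the inductive step, an arbitrary point of $F_j$ lies in $E\cap\iol{B(x',2^{-j-1}r)}$ for some $x'\in F_{j-1}$, so by the triangle inequality and the inductive hypothesis its distance to $z$ is at most $\bigl(r/2+\sum_{k=1}^{j-1}2^{-k-1}r\bigr)+2^{-j-1}r = r/2+\sum_{k=1}^{j}2^{-k-1}r$. Summing the geometric series gives $d(z,x)\le r/2 + r\sum_{k=1}^{j}2^{-k-1} < r/2 + r\cdot\tfrac12 = r$ (indeed $\sum_{k=1}^{\infty}2^{-k-1}=1/2$), so in fact $x\in B(z,r)\subset\iol{B(z,r)}$ uniformly in $j$. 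This establishes $F_j\subset E\cap\iol{B(z,r)}$ for all $j$, and the closure argument above completes the proof.

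I do not anticipate a genuine obstacle here; this is a routine verification, and indeed the statement is quoted from \cite{MR3673660} with a pointer to \cite[Section 7.6]{MR4306765} for the Euclidean case. The only point requiring a modicum of care is the bookkeeping of the radii: one must check that the partial sums $\tfrac r2+\sum_{k=1}^{j}2^{-k-1}r$ stay strictly below $r$ for every finite $j$, so that each $F_j$ is actually contained in the \emph{open} ball $B(z,r)$ (and a fortiori in its closure); passing to the union and then the closure only costs us the closed ball, which is exactly what is claimed. One should also note that closedness of $E$ is used precisely once, to absorb the closure operation: $\overline{\bigcup_j F_j}\subset\overline{E\cap\iol{B(z,r)}}=E\cap\iol{B(z,r)}$.
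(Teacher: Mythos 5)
Your proof is correct, and the argument is the standard one. The paper itself does not include a proof of this lemma; it cites \cite{MR3673660} and \cite[Section 7.6]{MR4306765} for the details, so there is no in-text argument to compare against, but your verification supplies exactly what is needed: the nested inclusions $F_{j-1}\subset F_j$ are the one-liner you give, $z\in F_0$ handles membership, and the key induction $d(z,x)\le r/2+\sum_{k=1}^{j}2^{-k-1}r<r$ for $x\in F_j$ bounds everything inside $B(z,r)$, after which closedness of $E\cap\iol{B(z,r)}$ absorbs the closure in the definition of $F$. You also correctly isolate where closedness of $E$ is actually used. No gaps.
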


The truncated set $F$ in Lemma~\ref{l.truncation} has the property
that there always exist certain balls whose intersection with $F$
contain big pieces of the original set $E$.

\begin{lemma}\label{l.pallot}
Assume that $E$, $B(z,r)$ and $F$ are as in Lemma~{\textup{\ref{l.truncation}}}.
Let $m\in\N_0$ and $x\in X$ be such that
$\dist(x,F) < 2^{-m+1}r$, and write $r_m=2^{-m-1}r$. 
There exists $y_{x,m}\in E$ such that 
$B(y_{x,m},r_m)\subset B(x,8r_m)$ 
and 
\begin{equation}\label{e.bubble} 
E\cap \iol{B(y_{x,m},\tfrac 1 2{r_m})} = F \cap \iol{B(y_{x,m},\tfrac 1 2{r_m})}.
\end{equation}
\end{lemma}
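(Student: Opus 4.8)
The plan is to trace through the recursive construction of $F$ in Lemma~\ref{l.truncation} and use it to locate a small ball centered at a point of $E$ that is close to $x$ and on which $E$ and $F$ coincide. First I would fix $m$ and $x$ with $\dist(x,F)<2^{-m+1}r$, and choose a point $p\in F$ with $d(x,p)<2^{-m+1}r = 4r_m$. Since $F=\overline{\bigcup_{j\in\N_0}F_j}$, by perturbing slightly I may assume $p\in F_j$ for some $j\in\N_0$; more importantly I want to find a point of $F$ near $x$ that lies at a stage of the construction where the balls used have radius comparable to $r_m$. Concretely, the recursion adds, at stage $j$, points of $E$ lying in balls $\iol{B(w,2^{-j-1}r)}$ around stage-$(j-1)$ points. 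So the natural candidate is to take $y_{x,m}$ to be a stage-$(m)$-type point: I would argue that there is a point $y\in F_m\cup F_0$ (depending on how $m$ compares to the "depth" at which $p$ first appears) with $d(x,y)$ controlled by a geometric-series bound of the form $d(x,p) + \sum_{j>m} 2^{-j-1}r$, which is $\lesssim r_m$, and then check the explicit constant $8$ works, i.e. $B(y,r_m)\subset B(x,8r_m)$.

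The key point for the coincidence \eqref{e.bubble} is the following: if $y\in F_m$ (or $y\in F_0$ in the degenerate case $m=0$), then the recursion tells us that every point of $E$ within distance $2^{-m-2}r$ of $y$ — or more precisely within the radius used to pass from stage $m$ to stage $m+1$, which is $2^{-(m+1)-1}r = \tfrac12 r_m$ — is automatically included in $F_{m+1}\subset F$. Wait: one must be careful with the indexing. The radius used to build $F_{m+1}$ from $F_m$ is $2^{-(m+1)-1}r = 2^{-m-2}r = \tfrac12 r_m$. Thus $E\cap \iol{B(y,\tfrac12 r_m)}\subset F_{m+1}\subset F$ whenever $y\in F_m$. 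Since trivially $F\cap \iol{B(y,\tfrac12 r_m)}\subset E\cap\iol{B(y,\tfrac12 r_m)}$ because $F\subset E$, this gives the desired equality \eqref{e.bubble}. So the whole argument reduces to producing $y_{x,m}\in F_m$ (renaming it $y_{x,m}$; note it automatically lies in $E$ since $F_m\subset E$) with the stated distance bound.

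To produce such a $y_{x,m}$: starting from $p\in F=\overline{\bigcup_j F_j}$ with $d(x,p)<4r_m$, first replace $p$ by a genuine element of $\bigcup_j F_j$ at distance $<4r_m$ from $x$ (using density and an $\epsilon$ of room — one should double-check the strict inequalities survive, possibly shrinking the initial bound using that $\dist(x,F)<2^{-m+1}r$ is strict). Say $p\in F_k$. If $k\le m$, then since $F_k\subset F_m$ (monotonicity from Lemma~\ref{l.truncation}), we already have $p\in F_m$ and may set $y_{x,m}=p$, with $d(x,y_{x,m})<4r_m$, giving $B(y_{x,m},r_m)\subset B(x,5r_m)\subset B(x,8r_m)$. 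If $k>m$, I would walk back down the recursion: $p\in F_k$ means there is $p_{k-1}\in F_{k-1}$ with $p\in \iol{B(p_{k-1},2^{-k-1}r)}$; iterating, I obtain points $p_{k-1},p_{k-2},\dots,p_m$ with $p_j\in F_j$ and $d(p_{j+1},p_j)\le 2^{-(j+1)-1}r = 2^{-j-2}r$. Then
\[
d(x,p_m)\le d(x,p)+\sum_{j=m}^{k-1} d(p_{j+1},p_j)\le 4r_m + \sum_{j=m}^{\infty} 2^{-j-2}r = 4r_m + 2^{-m-1}r = 4r_m + r_m = 5r_m,
\]
so setting $y_{x,m}=p_m\in F_m$ gives $d(x,y_{x,m})\le 5r_m$ and hence $B(y_{x,m},r_m)\subset B(x,6r_m)\subset B(x,8r_m)$. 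Either way the claimed inclusion holds (with room to spare, which absorbs the $\epsilon$ from the density step and the non-strictness of closed-ball containments).

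The main obstacle I anticipate is bookkeeping with the indices and the open/closed-ball distinctions: making sure the radius governing stage $m\to m+1$ is exactly $\tfrac12 r_m$ so that $\iol{B(y_{x,m},\tfrac12 r_m)}\cap E\subset F_{m+1}$, and ensuring the distance estimate's constant stays below $8$ after accounting for the density approximation of points in $F$ and for the degenerate base case $m=0$ (where $F_0 = E\cap\iol{B(z,r/2)}$ plays the role of the "stage" and one should verify $\tfrac12 r_0 = \tfrac14 r$ is compatible with the definition of $F_1$, which uses radius $2^{-2}r=\tfrac14 r$ — indeed it matches). None of these steps is deep; the content is entirely in the monotone nested structure of the $F_j$'s established in Lemma~\ref{l.truncation}.
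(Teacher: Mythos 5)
Your proof is correct: you trace the recursion to find $y_{x,m}\in F_m$ within distance $5r_m$ of $x$ (with room to spare for the density step and the constant $8$), and the coincidence \eqref{e.bubble} then follows because $F_{m+1}$ is built from $F_m$ via balls of radius exactly $2^{-(m+1)-1}r=\tfrac12 r_m$, so $E\cap\iol{B(y_{x,m},\tfrac12 r_m)}\subset F_{m+1}\subset F$, while $F\subset E$ gives the trivial reverse inclusion. This is the natural argument for the construction, and it matches the approach of the proof in Lehrb\"ack--Tuominen--V\"ah\"akangas (Math.\ Ann.\ 2017) to which the paper delegates the lemma.
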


The following lemma shows that a local variant
of the upper Aikawa condition persists to hold for the truncated set $F$.

\begin{lemma}\label{l.bigpiece}
Let $\alpha>0$ and assume that a closed set $E\subset X$ satisfies the upper Aikawa condition with exponent $\alpha$.
Then for every $\varepsilon >0$ there exists $\delta>0$ as follows.
Assume that $B(z,r)$ and $F$ are as in Lemma~{\textup{\ref{l.truncation}}}.
Let $m\in\N_0$ and $x\in X$ be such that
$\dist(x,F) < 2^{-m+1}r$ and either $2^{-m-3}r<\diam(E)$ if $\diam(E)>0$, or $2^{-m-3}r<\diam(X)$ if $\diam(E)=0$.Write $r_m=2^{-m-1}r$. 
Assume also that $K\subset X$ is a Borel set such that
\[
\vint_{B(x,8r_m)} \mathbf{1}_{X\setminus K}(y)\,dy\le \delta\,.
\]
Then
\[
r_m^{-\alpha}\le \varepsilon\vint_{B(x,8r_m)} \frac{\mathbf{1}_{K}(y)}{\dist(y,F)^{\alpha}}\,dy\,.
\]
\end{lemma}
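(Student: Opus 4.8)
The plan is to transfer the upper Aikawa condition for $E$ on a suitable ball to the truncated set $F$ via the ``bubble'' balls furnished by Lemma~\ref{l.pallot}. First I would apply the upper Aikawa condition for $E$ with the given $\varepsilon$ (rescaled by an appropriate dimensional and doubling constant to be fixed later) to obtain a $\delta_0>0$; the final $\delta$ will be $\delta_0$ multiplied by a similar controlled constant. Given $m$, $x$, and $K$ as in the statement, Lemma~\ref{l.pallot} produces a point $y_{x,m}\in E$ with $B(y_{x,m},r_m)\subset B(x,8r_m)$ and with the matching property \eqref{e.bubble} on the ball $\iol{B(y_{x,m},\tfrac12 r_m)}$. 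I would then work on the ball $B(y_{x,m},\tfrac12 r_m)$: by doubling and the inclusion $B(y_{x,m},\tfrac12 r_m)\subset B(x,8r_m)$, the smallness hypothesis $\vint_{B(x,8r_m)}\mathbf{1}_{X\setminus K}\,dy\le\delta$ upgrades to $\mu(B(y_{x,m},\tfrac12 r_m)\setminus K)\le\delta'\,\mu(B(y_{x,m},\tfrac12 r_m))$ with $\delta'=\delta\,\mu(B(x,8r_m))/\mu(B(y_{x,m},\tfrac12 r_m))$, which is $\delta$ times a constant depending only on $c_\mu$, since the two radii differ by the fixed factor $16$. Choosing $\delta$ so that $\delta'\le\delta_0$ makes the upper Aikawa condition for $E$ applicable on $B(y_{x,m},\tfrac12 r_m)$, provided the radius $\tfrac12 r_m=2^{-m-2}r$ is admissible; the hypothesis $2^{-m-3}r<\diam(E)$ (or $<\diam(X)$) is there precisely to guarantee $0<\tfrac12 r_m<\diam(E)$ — here I should double-check the exact bookkeeping, since $\tfrac12 r_m=2^{-m-2}r$ while the hypothesis controls $2^{-m-3}r$, so a factor of $2$ of slack is available.

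Applying the upper Aikawa condition for $E$ on $B(y_{x,m},\tfrac12 r_m)$ with the Borel set $K$ yields
\[
\vint_{B(y_{x,m},\tfrac12 r_m)}\frac{\mathbf{1}_K(y)}{\dist(y,E)^\alpha}\,dy\ge \frac{(\tfrac12 r_m)^{-\alpha}}{\varepsilon_0},
\]
where $\varepsilon_0$ is the rescaled target chosen at the outset. Now comes the key geometric observation: for $y\in B(y_{x,m},\tfrac12 r_m)$, the matching property \eqref{e.bubble} — valid on the closed ball $\iol{B(y_{x,m},\tfrac12 r_m)}$ — forces $\dist(y,E)=\dist(y,F)$. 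Indeed, the nearest point of $E$ to such a $y$ lies within distance $\tfrac12 r_m$ of $y$, hence inside $\iol{B(y_{x,m},r_m)}$; a short argument (taking a slightly larger enclosing ball, or noting $\dist(y,E)\le\tfrac12 r_m$ and that the part of $E$ at distance $<\tfrac12 r_m$ from $y$ is contained in $\iol{B(y_{x,m},r_m)}$ where $E$ and $F$ agree after a marginal radius adjustment) shows that the infimum defining $\dist(y,E)$ is attained over $F$ as well, so $\dist(y,F)\le\dist(y,E)$, and the reverse inequality $\dist(y,F)\ge\dist(y,E)$ is trivial from $F\subset E$. Substituting $\dist(y,F)$ for $\dist(y,E)$ turns the displayed inequality into a lower bound for $\vint_{B(y_{x,m},\tfrac12 r_m)}\mathbf{1}_K(y)\dist(y,F)^{-\alpha}\,dy$.

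Finally I would enlarge the domain of integration from $B(y_{x,m},\tfrac12 r_m)$ up to $B(x,8r_m)$. Since the integrand $\mathbf{1}_K(y)\dist(y,F)^{-\alpha}$ is nonnegative and $B(y_{x,m},\tfrac12 r_m)\subset B(x,8r_m)$, we have
\[
\int_{B(x,8r_m)}\frac{\mathbf{1}_K(y)}{\dist(y,F)^\alpha}\,dy\ge \int_{B(y_{x,m},\tfrac12 r_m)}\frac{\mathbf{1}_K(y)}{\dist(y,F)^\alpha}\,dy\ge \frac{\mu(B(y_{x,m},\tfrac12 r_m))}{\varepsilon_0}\,(\tfrac12 r_m)^{-\alpha},
\]
and dividing by $\mu(B(x,8r_m))$ and using doubling to compare $\mu(B(y_{x,m},\tfrac12 r_m))$ with $\mu(B(x,8r_m))$ (ratio bounded below by a constant depending only on $c_\mu$, as the radii differ by the fixed factor $16$ and the balls are nested), together with $(\tfrac12 r_m)^{-\alpha}=2^\alpha r_m^{-\alpha}\ge r_m^{-\alpha}$, gives
\[
\vint_{B(x,8r_m)}\frac{\mathbf{1}_K(y)}{\dist(y,F)^\alpha}\,dy\ge \frac{c}{\varepsilon_0}\,r_m^{-\alpha}
\]
for a constant $c=c(c_\mu)>0$. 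Choosing $\varepsilon_0=c\,\varepsilon$ at the start (and tracing this choice back through the definition of $\delta$) yields exactly $r_m^{-\alpha}\le\varepsilon\vint_{B(x,8r_m)}\mathbf{1}_K(y)\dist(y,F)^{-\alpha}\,dy$, as required. The main obstacle I anticipate is the identity $\dist(y,E)=\dist(y,F)$ on $B(y_{x,m},\tfrac12 r_m)$: one must be careful that \eqref{e.bubble} is stated on the closed half-ball and that points $y$ near its boundary could, a priori, have their nearest $E$-point just outside $\iol{B(y_{x,m},\tfrac12 r_m)}$ — this is handled either by working on a strictly smaller ball and then a limiting/open-ball argument, or by observing that $\dist(y,F)\le\diam$ bounds force the relevant part of $E$ into a region where \eqref{e.bubble} applies; the radius bookkeeping around the hypothesis $2^{-m-3}r<\diam(E)$ should also be verified line by line.
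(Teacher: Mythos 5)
Your overall strategy matches the paper's: apply Lemma~\ref{l.pallot} to obtain the bubble ball, upgrade the smallness hypothesis by doubling, invoke the upper Aikawa condition for $E$ on the small ball, pass from $\dist(\cdot,E)$ to $\dist(\cdot,F)$ via \eqref{e.bubble}, and enlarge the domain. However, there is a genuine gap in the distance comparison step, and you have the radius bookkeeping pointing in the wrong direction.

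The claim that $\dist(y,E)\ge\dist(y,F)$ (equivalently, equality, since $F\subset E$ always gives the reverse) for all $y\in B(y_{x,m},\tfrac12 r_m)$ does \emph{not} follow from \eqref{e.bubble}. Take $y$ with $d(y,y_{x,m})$ very close to $\tfrac12 r_m$. Since $y_{x,m}\in E$, we know $\dist(y,E)<\tfrac12 r_m$, but a nearly optimal competitor $w\in E$ for $\dist(y,E)$ need only satisfy $d(y_{x,m},w)<r_m$; such a $w$ may lie outside $\iol{B(y_{x,m},\tfrac12 r_m)}$, where \eqref{e.bubble} tells us nothing about whether $w\in F$. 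Your suggested patch --- that ``$E$ and $F$ agree after a marginal radius adjustment'' on $\iol{B(y_{x,m},r_m)}$ --- is not available: the matching is only on the closed $\tfrac12 r_m$ ball, and no limiting argument can repair a fixed-radius failure. The correct fix, which you gesture at without committing to, is to work on the strictly smaller ball $B'=B(y_{x,m},\tfrac14 r_m)$. Then for $y\in B'$ and any $w\in E$ with $d(y,w)<\tfrac14 r_m$, one has $d(y_{x,m},w)<\tfrac12 r_m$, so $w\in E\cap\iol{B(y_{x,m},\tfrac12 r_m)}=F\cap\iol{B(y_{x,m},\tfrac12 r_m)}\subset F$; since $\dist(y,E)\le d(y,y_{x,m})<\tfrac14 r_m$, approximating the infimum over $E$ by such $w$ gives $\dist(y,F)\le\dist(y,E)$, as needed. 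This is the choice the paper makes.

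The smaller ball also repairs your radius-admissibility check: the hypothesis is $2^{-m-3}r<\diam(E)$, and $\tfrac14 r_m=2^{-m-3}r$ is exactly the admissible scale, whereas your $\tfrac12 r_m=2^{-m-2}r$ is \emph{larger} and is not guaranteed to be $<\diam(E)$ --- the ``factor of 2 of slack'' you claim is actually a factor of 2 of deficit. Finally, your doubling bookkeeping should account for the center shift: with $B'=B(y_{x,m},\tfrac14 r_m)$ and $B=B(x,8r_m)$, one has $B\subset B(y_{x,m},16r_m)$, giving the ratio $16/(1/4)=2^6$ and hence the constant $c_\mu^6$ (the paper's choice), not a constant based on comparing $8r_m$ directly against your half-radius.
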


\begin{proof}
Fix $\varepsilon>0$. 
Let $\varepsilon'=c_\mu^{-6}\varepsilon>0$.
Choose $\delta'>0$ such that $E$ satisfies the upper Aikawa condition with exponent $\alpha$, for the pair $\varepsilon'>0$ and $\delta'>0$. Choose $\delta=c_\mu^{-6}\delta'$.
Write $B=B(x,8r_m)$ and $B'={B}(y_{x,m},r_m/4)$, where
$y_{x,m}\in E$ is as in Lemma \ref{l.pallot}.  
Recall from Lemma \ref{l.pallot}
that $B'\subset B$.
Using also \eqref{e.doubling} and the assumption, it follows that
\begin{align*}
\vint_{B'} \mathbf{1}_{X\setminus K}(y)\,dy\le
c_\mu^6\vint_{B} \mathbf{1}_{X\setminus K}(y)\,dy\le c_\mu^6\delta=\delta'\,.
\end{align*}
Hence, 
\[
\mu(B'\setminus K)\le \delta'\mu(B')\,.
\]
By  \eqref{e.bubble}, we have $\dist(y,E)\ge \dist(y,F)$ if $y\in B'={B}(y_{x,m},r_m/4)$.
Using also the inclusion $B'\subset B$ and the upper Aikawa condition  for the pair $\varepsilon'$ and $\delta'$, 
we obtain
\[
r_m^{-\alpha}\le (r_m/4)^{-\alpha}\le \varepsilon'\vint_{B'} \frac{\mathbf{1}_{K}(y)}{\dist(y,E)^\alpha}\,dy
\le \varepsilon'\vint_{B'} \frac{\mathbf{1}_{K}(y)}{\dist(y,F)^\alpha}\,dy
\le c_\mu^6 \varepsilon'\vint_{B} \frac{\mathbf{1}_{K}(y)}{\dist(y,F)^\alpha}\,dy
\,.
\]
Since $c_\mu^6\varepsilon'=\varepsilon$, this implies the desired estimate.
\end{proof}

By truncating the set $E$, we are able to obtain 
the following  holefilling lemma. This is an adaptation of 
\cite[Lemma 5.3]{MR3673660}, see also \cite[Lemma 7.27]{MR4306765}.

\begin{lemma}\label{l.absorb}
Let $1\le p<\infty$, $0<s<1$
and assume that $E$, $B(z,r)$ and $F$ are as in Lemma~{\textup{\ref{l.truncation}}}.
Suppose that $\sigma\ge 1$ and $\kappa\ge 2$, and $u\in \Lip(X)$ satisfies $u=0$ in $F$. Then
\begin{equation}\label{e.holefill}
\begin{split}
&\int_{B(z,\sigma \kappa r)\setminus F} \frac{\lvert u(x)\rvert^{p}}{\dist(x,F)^{sp}}\,dx\\
&\qquad\le 
 C_1\int_{B(z,\sigma\kappa r)}\int_{B(z,\sigma\kappa r)} \frac{\lvert u(x)-u(y)\rvert^p}{d(x,y)^{sp}\, \mu(B(x,d(x,y)))}\,dy\,dx+
 C_2\int_{B(z,\kappa r)\setminus F} \frac{\lvert u(x)\rvert^{p}}{\dist(x,F)^{sp}}\,dx\,,
\end{split}
\end{equation}
where $C_1=C(s,c_\mu,p,\sigma,\kappa)$ and $C_2=1+3^p \kappa^{sp} C(c_\mu,\sigma)$.
\end{lemma}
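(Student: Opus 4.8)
The strategy is the standard hole-filling dyadic decomposition of the annulus $B(z,\sigma\kappa r)\setminus B(z,\kappa r)$, combined with a Poincaré-type / fractional averaging estimate that is paid for by the double integral on the right. First I would split the domain of the left-hand integral as
\[
B(z,\sigma\kappa r)\setminus F = \bigl(B(z,\kappa r)\setminus F\bigr)\cup\bigl((B(z,\sigma\kappa r)\setminus B(z,\kappa r))\setminus F\bigr).
\]
The integral over $B(z,\kappa r)\setminus F$ is already one of the terms on the right (with constant $1$, which is why $C_2$ has the summand $1$), so the whole problem reduces to bounding the integral over the annular region $A:=(B(z,\sigma\kappa r)\setminus B(z,\kappa r))\setminus F$.

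On $A$ one has good control of $\dist(x,F)$ from below: since $z\in F$ (Lemma~\ref{l.truncation}) and $d(x,z)\ge \kappa r$, any point $x\in A$ satisfies $\dist(x,F)\le d(x,z)\le \sigma\kappa r$, but also $\dist(x,F)$ cannot be too small relative to the scale at which we will compare — actually the key point is the opposite: we want an upper bound on $1/\dist(x,F)^{sp}$, i.e. a \emph{lower} bound on $\dist(x,F)$. This is where the hypothesis $u=0$ on $F$ enters, not through the distance directly. The plan is: for $x\in A$, pick a nearest-ish point of $F$, but more robustly, use that $z\in F$ and write $|u(x)| = |u(x)-u(z)|$ since $u(z)=0$. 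Then
\[
\frac{|u(x)|^p}{\dist(x,F)^{sp}}\le \frac{|u(x)-u(z)|^p}{\dist(x,F)^{sp}}.
\]
Now bound $|u(x)-u(z)|^p$ by inserting the average $u_{B(z,\kappa r)}$: $|u(x)-u(z)|^p\le 2^{p-1}|u(x)-u_{B(z,\kappa r)}|^p + 2^{p-1}|u_{B(z,\kappa r)}-u(z)|^p$. The second term, after integrating $x$ over $A$ and using that on $A$ we have $\dist(x,F)\ge c\,\kappa r$ (because $x$ lies outside $B(z,\kappa r)$, so $d(x,z)\ge\kappa r$, and hmm — $\dist(x,F)$ could still be small if $F$ reaches near $x$; but $F\subset \iol{B(z,r)}$ by Lemma~\ref{l.truncation}, so for $x\notin B(z,\kappa r)$ with $\kappa\ge 2$ we get $\dist(x,F)\ge d(x,z)-r\ge \kappa r - r\ge \kappa r/2$), contributes $C(c_\mu,\sigma)\,\kappa^{-sp}\cdot\mu(B(z,\sigma\kappa r))\cdot |u_{B(z,\kappa r)}-u(z)|^p \cdot (\kappa r)^{-sp}\cdot\ldots$; and $|u_{B(z,\kappa r)}-u(z)|^p = |u_{B(z,\kappa r)}|^p$ is in turn controlled by $\vint_{B(z,\kappa r)}|u|^p$, which since $z\in F$ and $\dist(\cdot,F)\le \kappa r$ on that ball is $\le (\kappa r)^{sp}\vint_{B(z,\kappa r)\setminus F}|u|^p/\dist(\cdot,F)^{sp}$ — this produces precisely the $3^p\kappa^{sp}C(c_\mu,\sigma)$ factor in $C_2$ (the $3^p$ absorbing the combinatorial constants). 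The first term, $\int_A |u(x)-u_{B(z,\kappa r)}|^p/\dist(x,F)^{sp}\,dx$, is bounded using $\dist(x,F)\ge \kappa r/2$ again and then a standard fractional Poincaré estimate on $B(z,\sigma\kappa r)$: $\vint_{B(z,\sigma\kappa r)}|u-u_{B(z,\sigma\kappa r)}|^p$ (after one more triangle-inequality swap of the centering ball, costing a doubling constant) is dominated by the double integral
\[
\int_{B(z,\sigma\kappa r)}\int_{B(z,\sigma\kappa r)} \frac{|u(x)-u(y)|^p}{d(x,y)^{sp}\,\mu(B(x,d(x,y)))}\,dy\,dx
\]
times $C(s,c_\mu,p,\sigma,\kappa)$ — this is where $C_1$ comes from, and it is the only place the double integral is used. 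Such a fractional $(p,p)$-Poincaré inequality on a ball in a doubling space is standard; I would cite the same source as for Lemma~7.27 in \cite{MR4306765} or give a one-line telescoping argument over dyadic subballs.

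The step I expect to be the main obstacle is the bookkeeping of constants so that they come out exactly as $C_1=C(s,c_\mu,p,\sigma,\kappa)$ and $C_2=1+3^p\kappa^{sp}C(c_\mu,\sigma)$ — in particular verifying that $C_2$ depends only on $c_\mu$ and $\sigma$ (not on $p$, $s$, or $\kappa$ except through the explicit $\kappa^{sp}$ and $3^p$ factors). This forces one to be careful: the Poincaré constant, which does depend on all parameters, must be attached only to the double-integral term, and every estimate feeding into the $\int_{B(z,\kappa r)\setminus F}$-term must use only doubling ($\mu(B(z,\sigma\kappa r))\le c_\mu^{\lceil\log_2\sigma\rceil}\mu(B(z,\kappa r))$, hence depends on $c_\mu$ and $\sigma$) and the elementary geometric facts $z\in F$, $F\subset\iol{B(z,r)}$, $\kappa\ge 2$. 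A secondary technical point is handling the measure-density weight $\mu(B(x,d(x,y)))$ correctly when comparing the double integral to the Poincaré energy; this is routine given \eqref{e.doubling} but must be done. Once the annular estimate is in place, summing it with the trivially-bounded $B(z,\kappa r)\setminus F$ piece yields \eqref{e.holefill}.
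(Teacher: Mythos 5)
Your proposal is correct and follows essentially the same route as the paper's proof: split $B(z,\sigma\kappa r)\setminus F$ into $B(z,\kappa r)\setminus F$ (absorbed with constant $1$) and the annular part, bound $\dist(x,F)$ from below on the annulus using $F\subset\iol{B(z,r)}$ and $\kappa\ge 2$, apply the triangle inequality to pass from $|u|$ through the averages $u_{B(z,\sigma\kappa r)}$ and $u_{B(z,\kappa r)}$, control the oscillation terms by the fractional Poincar\'e inequality on $B(z,\sigma\kappa r)$, and control $|u_{B(z,\kappa r)}|^p$ by $\int_{B(z,\kappa r)\setminus F}|u|^p\,dx$ (using $u=0$ on $F$) and then insert $\dist(x,F)\le\kappa r$, which yields the $\kappa^{sp}$ factor. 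The only cosmetic difference is your entry point: you write $|u(x)|=|u(x)-u(z)|$ and use a two-term split, whereas the paper uses a three-term split producing the $3^p$ directly; since you still need to swap the centering ball from $B(z,\kappa r)$ to $B(z,\sigma\kappa r)$ before applying Poincar\'e, the two decompositions are combinatorially equivalent, and the explicit factor $3^p$ in $C_2$ simply reflects the paper's particular grouping rather than anything essential.
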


\begin{proof}
Since $F\subset \iol{B(z,r)}$ and $\kappa\ge 2$, we have 
\begin{align*}
&\int_{(B(z,\sigma\kappa r)\setminus F)\setminus B(z,\kappa r)} \frac{\lvert u(x)\rvert^{p}}{\dist(x,F)^{sp}}\,dx
\le r^{-sp}\int_{B(z,\sigma\kappa r)}\lvert u(x)\rvert^p\,dx\\
& \qquad \le 3^p r^{-sp} \biggl( \int_{B(z,\sigma\kappa r)}\bigl\lvert u(x)-u_{B(z,\sigma\kappa r)}\bigr\rvert^p\,dx 
\\
&\qquad\qquad+ \mu(B(z,\sigma\kappa r))\bigl\lvert u_{B(z,\sigma\kappa r)}-u_{B(z,\kappa r)}\bigr\rvert^p + 
\mu(B(z,\sigma\kappa r)) \bigl\lvert u_{B(z,\kappa r)}\bigr\rvert^p \biggr).
\end{align*}
By applying the  H\"older's inequality and the fractional $(s,p,p,p)$-Poincar\'e inequality from \cite[Lemma 2.2]{DLV2021}, we get
\begin{align*}
& 3^p r^{-sp} \biggl(\int_{B(z,\sigma\kappa r)} \bigl\lvert u(x)-u_{B(z,\sigma\kappa r)}\bigr\rvert^p\,dx + 
  \mu(B(z,\sigma\kappa r))\bigl\lvert u_{B(z,\sigma\kappa r)}-u_{B(z,\kappa r)}\bigr\rvert^p\biggr) \\
& \qquad \le C(c_\mu,\sigma,p)r^{-sp}\int_{B(z,\sigma\kappa r)}\bigl\lvert u(x)-u_{B(z,\sigma\kappa r)}\bigr\rvert^p\,dx\\
 &  \qquad\le C(s,c_\mu,p,\sigma,\kappa)\int_{B(z,\sigma\kappa r)}\int_{B(z,\sigma\kappa r)} \frac{\lvert u(x)-u(y)\rvert^p}{d(x,y)^{sp}\, \mu(B(x,d(x,y)))}\,dy\,dx.
\end{align*}
On the other hand, since  $u=0$ in $F$,  we obtain 
\begin{align*}
3^pr^{-sp} \mu(B(z,\sigma\kappa r)) \bigl\lvert u_{B(z,\kappa r)}\bigr\rvert^p 
 &\le 3^p C(c_\mu,\sigma)r^{-sp}\int_{B(z,\kappa r)\setminus F}\lvert u(x)\rvert^p\,dx\\
& \le 3^p \kappa^{sp} C(c_\mu,\sigma)\int_{B(z,\kappa r)\setminus F}\frac{\lvert u(x)\rvert^p}{\dist(x,F)^{sp}}\,dx\,.
\end{align*}
Since
\begin{align*}
\int_{B(z,\sigma \kappa r)\setminus F} \frac{\lvert u(x)\rvert^{p}}{\dist(x,F)^{sp}}\,dx
=
\int_{B(z,\kappa r)\setminus F} \frac{\lvert u(x)\rvert^{p}}{\dist(x,F)^{sp}}\,dx+\int_{(B(z,\sigma\kappa r)\setminus F)\setminus B(z,\kappa r)} \frac{\lvert u(x)\rvert^{p}}{\dist(x,F)^{sp}}\,dx\,,
\end{align*}
the claim follows from the above estimates.
\end{proof}

\section{Local Hardy inequality}\label{e.localH}

We show
that the upper Aikawa condition implies a local variant of the Hardy inequality.
The proof combines various techniques. For instance, we borrow the following
fractional variant of the  Maz'ya truncation method from \cite{MR3331699}.

\begin{lemma}\label{l.mazya}
Let $u\in\Lip(X)$. For every $k\in\Z$, define
a function $u_k:X\to [0,1]$ by
\[
u_k(x) = \begin{cases}
1, & \text{if $|u(x)| \geq 2^{k+1}$,}\\
\frac{|u(x)|}{2^k}-1, &\text{if $2^k < |u(x)| < 2^{k+1}$,}\\
0, & \text{if $|u(x)| \leq 2^k$.}
\end{cases}
\]
Then there exists a constant $C=C(p)$ such that
\begin{align*}
\sum_{k\in \Z} 2^{kp}\int_{B(z,R)}\int_{B(z,R)}&\frac{\lvert u_k(x)-u_k(y)\rvert^p}{d(x,y)^{sp}\, \mu(B(x,d(x,y)))}\,dy\,dx \\
&\qquad \le C\int_{B(z,R)}\int_{B(z,R)}\frac{\lvert u(x)-u(y)\rvert^p}{d(x,y)^{sp}\, \mu(B(x,d(x,y)))}\,dy\,dx
\end{align*}
whenever $z\in X$ and $R>0$.
\end{lemma}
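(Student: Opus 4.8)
The plan is to prove this fractional Maz'ya truncation inequality by a pointwise estimate on the integrand, summed over $k\in\Z$. Fix $x,y\in B(z,R)$. Everything reduces to the claim that
\[
\sum_{k\in\Z} 2^{kp}\,\lvert u_k(x)-u_k(y)\rvert^p \le C(p)\,\lvert u(x)-u(y)\rvert^p,
\]
since after establishing this we simply multiply by the kernel $d(x,y)^{-sp}\mu(B(x,d(x,y)))^{-1}$, integrate in $x$ and $y$ over $B(z,R)\times B(z,R)$, and exchange the (nonnegative) sum and the integrals by Tonelli's theorem. Note the kernel itself does not depend on $k$, so the constant $C(p)$ is unaffected.

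The core is therefore the elementary pointwise inequality above. First I would record the structural facts about the truncations $u_k$: each $u_k$ is $1$-Lipschitz as a function of $\lvert u\rvert$ scaled by $2^{-k}$ in the sense that $\lvert u_k(x)-u_k(y)\rvert \le \min\{1,\, 2^{-k}\bigl\lvert \lvert u(x)\rvert-\lvert u(y)\rvert\bigr\rvert\} \le \min\{1,\,2^{-k}\lvert u(x)-u(y)\rvert\}$, using $\bigl\lvert\lvert a\rvert-\lvert b\rvert\bigr\rvert\le\lvert a-b\rvert$; and $u_k(x)=0$ unless $\lvert u(x)\rvert>2^k$ (equivalently $u_k(x)\ne 0$ forces $2^k<\lvert u(x)\rvert$, so only $k<\log_2\max\{\lvert u(x)\rvert,\lvert u(y)\rvert\}$ contribute a nonzero term), while $u_k(x)=1$ whenever $\lvert u(x)\rvert\ge 2^{k+1}$. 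Without loss of generality assume $\lvert u(x)\rvert\ge\lvert u(y)\rvert$, and set $t=\lvert u(x)-u(y)\rvert$. The terms with $u_k(x)-u_k(y)\neq 0$ split into two regimes: the "large $k$" range, where $2^{-k}t\le 1$, in which we bound $\lvert u_k(x)-u_k(y)\rvert^p\le (2^{-k}t)^p$, so $2^{kp}\lvert u_k(x)-u_k(y)\rvert^p\le t^p$ — and only boundedly many such $k$ contribute because these $k$ satisfy both $2^k\le \lvert u(x)\rvert$ (nonvanishing) and $2^k\ge t$; and the "small $k$" range $2^{-k}t>1$, where $2^{kp}\lvert u_k(x)-u_k(y)\rvert^p\le 2^{kp}$ and these $k$ satisfy $2^k<t$, so the geometric series $\sum 2^{kp}$ over them is $\le C(p)\,t^p$. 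Adding the two contributions gives the pointwise bound with an explicit $C(p)$.

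The one point requiring a little care — and the main (mild) obstacle — is counting the number of indices $k$ in the first regime: we need that the set of $k$ with $t\le 2^k\le \lvert u(x)\rvert$ has cardinality bounded by something like $1+\log_2(\lvert u(x)\rvert/t)$, which is not a universal constant. The fix is the standard one: do not bound each such term by $t^p$, but rather observe $\lvert u_k(x)-u_k(y)\rvert$ telescopes — more precisely, for $k$ in this range either $u_k(x)=1$, or $2^k<\lvert u(x)\rvert<2^{k+1}$, which pins down $k$ up to one value; so in fact among the "large $k$" indices all but at most one or two have $u_k(x)=1$, and for those $\lvert u_k(x)-u_k(y)\rvert=\lvert 1-u_k(y)\rvert=1-u_k(y)$, which is itself $\le \min\{1,2^{-k}t\}$ and, crucially, is \emph{nonincreasing} in $k$ so that $\sum_k 2^{kp}(1-u_k(y))^p$ is again dominated by a geometric series in the single relevant scale $2^k\sim \lvert u(y)\rvert$ — and one checks directly this is $\le C(p)\,\lvert u(x)-u(y)\rvert^p$ using $\lvert u(x)\rvert\ge\lvert u(y)\rvert$ together with $t\ge \lvert u(x)\rvert-\lvert u(y)\rvert$. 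Alternatively, and perhaps most cleanly, one can invoke verbatim the real-line truncation estimate of \cite{MR3331699} applied to the map $\lvert u\rvert$, since the kernel is symmetric and positive and the whole statement is insensitive to the metric-measure structure beyond the positivity of the kernel; I would present the self-contained pointwise argument but cite \cite{MR3331699} as the source of the method. Once the pointwise inequality is in hand, the proof is complete after the Tonelli interchange described above.
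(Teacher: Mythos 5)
Your overall strategy is sound and is, at bottom, the same as the paper's: reduce to the pointwise summability estimate
\[
\sum_{k\in\Z} 2^{kp}\,\lvert u_k(x)-u_k(y)\rvert^p \le C(p)\,\lvert u(x)-u(y)\rvert^p,
\]
and then integrate against the (nonnegative, $k$-independent) kernel via Tonelli. The paper proves the same core inequality, just organized through the level-set decomposition $B(z,R)=N\cup\bigcup_i A_i$ rather than stated pointwise; your reduction is cleaner in that respect. Using the Lipschitz bound $\lvert u_k(x)-u_k(y)\rvert\le\min\{1,2^{-k}\lvert u(x)-u(y)\rvert\}$ and splitting at $2^k\approx t:=\lvert u(x)-u(y)\rvert$ is also the right idea, and you correctly identify that the naive count of ``large-$k$'' indices ($t\le 2^k\le\lvert u(x)\rvert$) is unbounded, so the argument needs an extra input.

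The proposed fix, however, contains a genuine error. You claim that $1-u_k(y)$ is \emph{nonincreasing} in $k$. It is the opposite: as $k$ grows, the truncation threshold $2^k$ rises, so $u_k(y)$ falls from $1$ to $0$ and hence $1-u_k(y)$ is \emph{nondecreasing}. So the claimed domination by a geometric series ``at the single scale $2^k\sim\lvert u(y)\rvert$'' does not follow as written, and this is precisely the place where the proof must say something true. The ingredient you are missing is the \emph{second} vanishing constraint: the $k$-th term is zero not only when $u_k(x)=u_k(y)=0$ (which forces $2^k<\max\{\lvert u(x)\rvert,\lvert u(y)\rvert\}$), but also when $u_k(x)=u_k(y)=1$, which forces $2^{k+1}>\min\{\lvert u(x)\rvert,\lvert u(y)\rvert\}$, i.e.\ $2^k>\tfrac12\lvert u(y)\rvert$ (in your normalization $\lvert u(x)\rvert\ge\lvert u(y)\rvert$). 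Combined with $2^k<\lvert u(x)\rvert$, $2^k\ge t$, and $t\ge\lvert u(x)\rvert-\lvert u(y)\rvert$, one checks that $\max\{t,\lvert u(y)\rvert/2\}\ge \lvert u(x)\rvert/3$, so the ``large-$k$'' range contains at most two dyadic scales, each contributing at most $t^p$. This closes the gap with no monotonicity claim. The paper encodes the same fact more slickly: writing $2^J<\max\{\lvert u(x)\rvert,\lvert u(y)\rvert\}\le 2^{J+1}$, it shows $\lvert u_k(x)-u_k(y)\rvert\le 2\cdot 2^{-J}\lvert u(x)-u(y)\rvert$ for every relevant $k\le J$, after which $\sum_{k\le J}2^{(k-J)p}$ is a single convergent geometric series and no case count is needed at all. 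Also, citing \cite{MR3331699} is not a self-contained argument here, since the paper itself cites \cite{MR3331699} as the source of this very lemma; if you want a proof rather than a reference, you need to repair the pointwise estimate as above.
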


\begin{proof}
For every $i\in \Z$ we
denote
\[
 E_i = \{x\in B(z,R) : |u(x)| > 2^i \} \quad \text{and} \quad A_i = E_i \setminus E_{i+1}\,.
\]
Fix $k\in\Z$ and observe that
\begin{equation}\label{e.decomp}
\begin{split}
B(z,R)&= \{x\in B(z,R)\,:\, 0\le \lvert u(x)\rvert<\infty\} \\&= \underbrace{\{x\in B(z,R) \,:\, u(x)=0\}}_{=:N}\cup \bigcup_{i\in \Z} A_i\\
&=N  \cup  \bigcup_{i<k} A_i  \cup A_k \cup \bigcup_{i>k} A_i\,.
\end{split}
\end{equation}
By using the decomposition \eqref{e.decomp} and combining terms, we obtain that
\begin{equation}\label{e.plugL}
\begin{split}
&\int_{B(z,R)} \int_{B(z,R)} \frac{\lvert u_k(x)-u_k(y)\rvert^p}{d(x,y)^{sp}\, \mu(B(x,d(x,y)))}\,dy\,dx\\
&\qquad \leq
\bigg\{  \sum_{i\leq k} \sum_{j\geq k} \int_{A_i} \int_{A_j} + \sum_{j\ge k} \int_{N} \int_{A_j} \bigg\}\frac{\lvert u_k(x)-u_k(y)\rvert^p}{d(x,y)^{sp}\, \mu(B(x,d(x,y)))}\,dy\,dx\\
&\qquad  \qquad +\bigg\{  \sum_{i\geq k} \sum_{j\leq k} \int_{A_i} \int_{A_j} + \sum_{i\ge k} \int_{A_i} \int_{N} \bigg\}\frac{\lvert u_k(x)-u_k(y)\rvert^p}{d(x,y)^{sp}\, \mu(B(x,d(x,y)))}\,dy\,dx\,.
\end{split}
\end{equation}
We observe that $|u_k(x)-u_k(y)| \leq 2^{-k} |u(x)-u(y)|$. Moreover, if $x\in A_i$ and $y\in A_j$,
where $i+2\leq j$, then
$ |u(x)-u(y)| \geq |u(y)| - |u(x)| \geq 2^{j-1}$,
hence $|u_k(x) - u_k(y)| \leq 1 \leq 2\cdot 2^{-j} |u(x)-u(y)|$.
Therefore,
\[
 |u_k(x)-u_k(y)| \leq 2\cdot 2^{-j} |u(x)-u(y)|, \quad (x,y)\in  A_i\times A_j\,,
\]
whenever $i\leq k \leq j$. 
Observe also that $\sum_{k=i}^j 2^{(k-j)p} < \sum_{k=-\infty}^j 2^{(k-j)p}  \le \frac{1}{1-2^{-p}}$. Hence,
\begin{align*}
\sum_{k\in \Z} 2^{kp}  \sum_{i\leq k} \sum_{j\geq k} & \int_{A_i} \int_{A_j} \frac{\lvert u_k(x)-u_k(y)\rvert^p}{d(x,y)^{sp}\, \mu(B(x,d(x,y)))}\,dy\,dx\\
& \le  2^{p}\sum_{k\in \Z} \sum_{i\leq k} \sum_{j\geq k} 2^{(k-j)p} \int_{A_i} \int_{A_j} \frac{\lvert u(x)-u(y)\rvert^p}{d(x,y)^{sp}\, \mu(B(x,d(x,y)))}\,dy\,dx\\
&\le  2^{p} \sum_{i\in\Z} \sum_{j\in\Z} \sum_{i\le k\le j}2^{(k-j)p} \int_{A_i} \int_{A_j} \frac{\lvert u(x)-u(y)\rvert^p}{d(x,y)^{sp}\, \mu(B(x,d(x,y)))}\,dy\,dx\\
&\le \frac{2^p}{1-2^{-p}}\int_{B(z,R)} \int_{B(z,R)} \frac{\lvert u(x)-u(y)\rvert^p}{d(x,y)^{sp}\, \mu(B(x,d(x,y)))}\,dy\,dx.
\end{align*}
A similar argument shows that
\begin{align*}
&\sum_{k\in \Z} 2^{kp}\sum_{j\ge k} \int_{N} \int_{A_j} \frac{\lvert u_k(x)-u_k(y)\rvert^p}{d(x,y)^{sp}\, \mu(B(x,d(x,y)))}\,dy\,dx\\
&\qquad \qquad  \le  2^{p} \sum_{k\in \Z} \sum_{j\ge k}  2^{(k-j)p} \int_{N} \int_{A_j} \frac{\lvert u(x)-u(y)\rvert^p}{d(x,y)^{sp}\, \mu(B(x,d(x,y)))}\,dy\,dx\\
&\qquad \qquad\le \frac{2^p}{1-2^{-p}}\int_{B(z,R)} \int_{B(z,R)} \frac{\lvert u(x)-u(y)\rvert^p}{d(x,y)^{sp}\, \mu(B(x,d(x,y)))}\,dy\,dx\,.
\end{align*}
Fubini's theorem and similar estimates for the two remaining series in \eqref{e.plugL} shows that 
\begin{align*}
&\sum_{k\in \Z} 2^{kp}\int_{B(z,R)}\int_{B(z,R)} \frac{\lvert u_k(x)-u_k(y)\rvert^p}{d(x,y)^{sp}\, \mu(B(x,d(x,y)))}\,dy\,dx\\
&\qquad  \leq
 \frac{2^{p+2}}{1-2^{-p}}\int_{B(z,R)} \int_{B(z,R)} \frac{\lvert u(x)-u(y)\rvert^p}{d(x,y)^{sp}\, \mu(B(x,d(x,y)))}\,dy\,dx\,.
\end{align*}
Thus, we obtain the desired inequality with $C = \frac{2^{p+2}}{1-2^{-p}}$.
\end{proof}

The following theorem gives a local fractional Hardy inequality
for sets satisfying the upper Aikawa condition with suitable exponent.

\begin{theorem}\label{p.maz'ya_local}
Let $1<q<p<\infty$ and $0<s<1$.
Suppose that a closed set $E\subset X$ satisfies the
upper Aikawa condition with exponent $\alpha=sq$.
Assume that $B(z,r)$ and $F$ are as in Lemma \ref{l.truncation}, with  either  $0<r<\diam(E)$ if
$\diam(E)>0$, or $0<r<\diam(X)$ if $\diam(E)=0$.
Then there exists a constant $C>0$ independent of $z$ and $r$ such that
\[
\int_{B(z,8r)\setminus F}  \frac{\lvert u(x)\rvert^p}{\dist(x,F)^{sp}}\,dx\le C 
\int_{B(z,8r)} \int_{B(z,8r)} \frac{\lvert u(x)-u(y)\rvert^p}{d(x,y)^{sp}\, \mu(B(x,d(x,y)))}\,dy\,dx
\]
for all  $u\in \Lip(X)$ such that $u=0$ in $F$.
\end{theorem}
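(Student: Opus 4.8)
The plan is to reduce the asserted inequality, via the fractional Maz'ya truncation (Lemma~\ref{l.mazya}), to a \emph{uniform} Hardy inequality for the bounded truncations $u_k$, and to prove that latter inequality by combining a dyadic decomposition of $B(z,8r)\setminus F$ into shells around $F$ with the ``big bubbles'' of the truncated set (Lemma~\ref{l.pallot}), the localized upper Aikawa condition (Lemma~\ref{l.bigpiece}), the holefilling estimate (Lemma~\ref{l.absorb}), and the fractional $(s,p)$-Poincar\'e inequality already used in the proof of Lemma~\ref{l.absorb}. Throughout write $\mathcal E(v;\Omega)=\int_\Omega\int_\Omega \frac{\lvert v(x)-v(y)\rvert^p}{d(x,y)^{sp}\,\mu(B(x,d(x,y)))}\,dy\,dx$.

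\emph{Step 1 (reduction to bounded truncations).} Splitting $B(z,8r)$ into the annuli $A_i=\{2^i<\lvert u\rvert\le 2^{i+1}\}$ and using that $u_{i-1}\equiv 1$ on $\{\lvert u\rvert\ge 2^i\}\supseteq A_i$, one obtains the elementary bound
\[
\int_{B(z,8r)\setminus F}\frac{\lvert u(x)\rvert^p}{\dist(x,F)^{sp}}\,dx\le 4^p\sum_{k\in\Z}2^{kp}\int_{B(z,8r)\setminus F}\frac{u_k(x)^p}{\dist(x,F)^{sp}}\,dx .
\]
By Lemma~\ref{l.mazya} it therefore suffices to establish, with a constant $C$ independent of $k$,
\[
\int_{B(z,8r)\setminus F}\frac{v(x)^p}{\dist(x,F)^{sp}}\,dx\le C\,\mathcal E(v;B(z,8r))
\]
for every $v\in\Lip(X)$ with $0\le v\le 1$ and $v=0$ in $F$; applying this with $v=u_k$, summing over $k$, and invoking Lemma~\ref{l.mazya} then closes the argument. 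The whole point of passing to truncations is that boundedness of $v$ makes available the trivial inequality $v^p\le v^q$, which is what will let us bridge the gap between the exponent $\alpha=sq$ in the hypothesis and the exponent $sp$ in the conclusion; with $q=p$ there would be no such gain.

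\emph{Step 2 (holefilling and the shell decomposition).} Fix such a $v$ and apply Lemma~\ref{l.absorb} with $\kappa=2$, $\sigma=4$, so that $\sigma\kappa r=8r$; its $C_1$-term is already of the desired form, so it remains to bound $\int_{B(z,2r)\setminus F}\frac{v^p}{\dist(\cdot,F)^{sp}}\,dx$. Since $z\in F\subset\overline{B(z,r)}$, every $x\in B(z,2r)$ satisfies $\dist(x,F)<2r$, and hence $B(z,2r)\setminus F=\bigcup_{m\ge -2}W_m$ with $W_m=\{x\in B(z,2r):r_{m+1}<\dist(x,F)\le r_m\}$ and $r_m=2^{-m-1}r$. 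I would dispatch the two coarse shells $W_{-2}\cup W_{-1}$, on which $\dist(\cdot,F)\approx r$, by a single fixed-scale estimate — covering by finitely many balls of radius $\approx r$ that are still contained in $B(z,8r)$, on which Lemma~\ref{l.bigpiece} at scale $r_0$ together with a fractional Poincar\'e inequality yield control by $\mathcal E(v;B(z,8r))$ — and treat the fine shells $W_m$, $m\ge 0$, by the per-ball estimate below; note that for $m\ge 0$ and $x\in W_m$ one has $B(x,8r_m)\subset B(z,6r)\subset B(z,8r)$, which keeps all auxiliary seminorms inside $B(z,8r)$.

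\emph{Step 3 (per-ball estimate on a fine shell).} Fix $m\ge 0$, and cover $W_m$ by balls $B_i=B(x_i,8r_m)$, $x_i\in W_m$, with overlap bounded in terms of $c_\mu$ only. On $W_m\cap B_i$ we have $\dist(\cdot,F)\ge r_m/2$, so $\int_{W_m\cap B_i}\frac{v^p}{\dist(\cdot,F)^{sp}}\,dx\le 2^{sp}r_m^{-sp}\int_{B_i}v^p\,dx$, and the task reduces to bounding $r_m^{-sp}\int_{B_i}v^p$. For this I would use the bubble $B(y_{x_i,m},r_m/2)\subset B_i$ of Lemma~\ref{l.pallot}, on whose intersection with $F$ (equivalently with $E$, by \eqref{e.bubble}) the function $v$ vanishes; choose the threshold $t_i$ so that $\{v>t_i\}\cap B_i$ has relative measure at most the $\delta$ provided by Lemma~\ref{l.bigpiece}, split $\int_{B_i}v^p\le t_i^p\mu(B_i)+\int_{\{v>t_i\}\cap B_i}v^p$, estimate $t_i^p\mu(B_i)$ by a two-set fractional Poincar\'e inequality (using that $v$ vanishes on a subset of $B_i$ that, by Lemma~\ref{l.bigpiece} applied with $K=\{v\le t_i\}\cap B_i$, carries the weight $\dist(\cdot,F)^{-sq}$ with average $\gtrsim r_m^{-sq}$), and estimate the level part $\int_{\{v>t_i\}\cap B_i}v^p$ with the help of $v^p\le v^q$; balancing the two exponents $sq<sp$ should give $r_m^{-sp}\int_{B_i}v^p\le C\,\mathcal E(v;\lambda B_i)$ with a factor that is geometrically summable in $m$. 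Summing over $i$ (bounded overlap), then over $m\ge 0$, adding the coarse-shell estimate of Step~2, and finally summing over $k$ and applying Lemma~\ref{l.mazya} yields the theorem.

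\emph{Main obstacle.} The delicate point is precisely the per-ball estimate of Step~3 and the subsequent summation over the shell index $m$. A crude bound of the form $r_m^{-sp}\int_{B_i}v^p\lesssim\mathcal E(v;\lambda B_i)$ does hold, but summed over the $\sim\log(1/t)$ scales that a given pair $(x,y)$ with $d(x,y)=t$ meets it would only produce a logarithmic loss in the number of scales. Converting this into genuine geometric decay in $m$ is exactly where the strict inequality $\alpha=sq<sp$ and the boundedness $0\le v\le 1$ of the Maz'ya truncations have to be exploited together; carrying this out while keeping every constant independent of $z$, $r$, and $k$ is the technical core of the proof.
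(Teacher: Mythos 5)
There is a genuine gap in your Step~1, and it propagates through the rest of the plan. The ``uniform in $k$'' inequality you want to extract, namely $\int_{B(z,8r)\setminus F}\frac{v^p}{\dist(\cdot,F)^{sp}}\le C\,\mathcal E(v;B(z,8r))$ for all bounded Lipschitz $v$ vanishing on $F$, is not a weaker statement than the theorem---it \emph{is} the theorem (restricted to bounded $u$, from which the general case already follows by your own Step~1 splitting). So you are not reducing the problem; you are restating it. The paper does not prove any such uniform per-$k$ inequality, and in fact cannot: the hypothesis is Aikawa at exponent $sq$, strictly smaller than $sp$, so a direct $sp$-Hardy estimate for a single truncation is simply not available. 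What the paper actually proves is a \emph{coupled} estimate across three consecutive truncations $u_{k-1},u_k,u_{k+1}$: the term $\int\frac{u_{k+1}^p}{\dist(\cdot,F)^{sp}}$ is bounded by fractional energies of $u_k,u_{k+1}$ plus $\varepsilon^{p/q}\int\frac{u_{k-1}^p}{\dist(\cdot,F)^{sp}}$, and the last term is absorbed only \emph{after} multiplying by $2^{(k+3)p}$ and summing over $k$ (and only after the initial Fatou/$F_\tau$ regularisation has guaranteed finiteness of the absorbed series). This recursive--absorption structure is the crux, and it is absent from your outline.

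The second missing mechanism is the Hardy--Littlewood maximal operator. The paper converts exponent $sq$ to $sp$ not via the pointwise inequality $v^p\le v^q$ (which pushes in the wrong direction here---combined with a $q$-Poincar\'e it would produce $r_m^{-s(p-q)}\to\infty$, and it changes the seminorm exponent to $q$, which is not what the conclusion asks for) but by writing the Aikawa bound as $1\lesssim \varepsilon\, r_i^{sq}\,M(h_k^q)(x)$ with $h_k=\mathbf 1_{B(z,8r)\setminus F}\,u_{k-1}/\dist(\cdot,F)^s$, raising to the power $p/q>1$, and then using boundedness of $M$ on $L^{p/q}$ to obtain $\int_X(Mh_k^q)^{p/q}\lesssim\int h_k^p$. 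It is precisely $p/q>1$ that pays for the passage from exponent $q$ to exponent $p$; without the maximal function step there is no gain. Your Step~3 replaces this by the hope that a per-ball estimate $r_m^{-sp}\int_{B_i}v^p\lesssim\mathcal E(v;\lambda B_i)$ sums geometrically over the shell index $m$, but---as you correctly note---a given pair $(x,y)$ is counted in $\sim\log(r/d(x,y))$ shells, giving at best a logarithmic loss, and you offer no concrete remedy. The paper sidesteps the shell-summation issue by using a Whitney-type $5r$-cover with radii $r_i=\dist(x_i,F)/2$, which has globally bounded overlap (not merely per scale), together with the maximal function bound applied once on all of $X$ rather than shell by shell. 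In summary: the proposal correctly identifies Lemmas~\ref{l.mazya}, \ref{l.pallot}, \ref{l.bigpiece}, \ref{l.absorb} as the ingredients, but the central idea---the cascading three-truncation estimate, absorption after summation in $k$, and the $L^{p/q}$ maximal function bound that exploits $q<p$---is missing, and the shell-by-shell substitute you propose does not close.
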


\begin{proof}
Let 
$u\in \Lip(X)$ such that $u=0$ in $F$.
By considering the Lipschitz functions
\[
\max\{0,\lvert u(x)\rvert-\tau\}\,,\qquad \tau>0\,,
\]
and applying Fatou's lemma, we may assume that $u=0$ in $F_\tau$ for some
$\tau>0$, where $F_\tau$ is the $\tau$-neighborhood of $F$.
 For $k\in \Z$, we
denote
\[
 E_k = \{x\in B(z,8r) : |u(x)| > 2^k \}
\]
and we define a function $u_k:X\to [0,1]$ by
\[
u_k(x) = \begin{cases}
1, & \text{if $|u(x)| \geq 2^{k+1}$,}\\
\frac{|u(x)|}{2^k}-1, &\text{if $2^k < |u(x)| < 2^{k+1}$,}\\
0, & \text{if $|u(x)| \leq 2^k$.}
\end{cases}
\]
Then $u_k \in \Lip(X)$ and it satisfies $u_k=1$ on $E_{k+1}$ and $u_k=0$ on $B(z,8r)\setminus E_k\supset F$.

We estimate
\begin{equation}\label{e.distcap}
\begin{split}
 \int_{B(z,8r)\setminus F}  \frac{\lvert u(x)\rvert^p}{\dist(x,F)^{sp}}\,dx
 &\leq
 \sum_{k\in \Z} 2^{(k+3)p} \int_{E_{k+2}\setminus E_{k+3}}  \frac{1}{\dist(x,F)^{sp}}\,dx \\&\leq  \sum_{k\in \Z} 2^{(k+3)p} \int_{E_{k+2}}  \frac{1}{\dist(x,F)^{sp}}\,dx\\&\leq  \sum_{k\in \Z} 2^{(k+3)p} \int_{B(z,8r)\setminus F}  \frac{u_{k+1}(x)^p}{\dist(x,F)^{sp}}\,dx\,.
\end{split}
\end{equation}
Fix $k\in\Z$. By the $5r$-covering lemma \cite[Lemma 1.7]{MR2867756}, there exists a cover of $B(z,2r)\setminus F$
by balls $B_i=B(x_i,r_i)$, $i\in\N$,  such that 
$\sum_{i=1}^\infty \mathbf{1}_{B_i}\le C(c_\mu)$ and
$x_i\in B(z,2r)\setminus F$ and $r_i=\dist(x_i,F)/2$
for every $i\in\N$.
We first estimate the integral over the smaller set $B(z,2r)\setminus F$ as follows
\begin{equation}\label{e.2b}
\begin{split}
\int_{B(z,2r)\setminus F}  \frac{u_{k+1}(x)^p}{\dist(x,F)^{sp}}\,dx&\le\sum_{i=1}^\infty
\int_{B(z,2r)\cap B_i}  \frac{u_{k+1}(x)^p}{\dist(x,F)^{sp}}\,dx\\&\le \sum_{i=1}^\infty r_i^{-sp} \int_{B(z,2r)\cap B_i} u_{k+1}(x)^p\,dx\,.
\end{split}
\end{equation}
For every $x\in X$ we write
\begin{align*}
g_k(x)&=\mathbf{1}_{B(z,8r)}(x)\left(\int_{B(z,8r)} \frac{\lvert u_k(x)-u_k(y)\rvert^p}{d(x,y)^{sp}\, \mu(B(x,d(x,y)))}\,dy\right)^{\frac{1}{p}}\,,\\
h_k(x)&=\mathbf{1}_{B(z,8r)\setminus F}(x)\frac{u_{k-1}(x)}{\dist(x,F)^s}\,.
\end{align*}
Fix $i\in\N$ and fix a point $x\in B(z,2r)\cap B_i$. 
We  aim to  show that
\begin{equation}\label{e.des}
u_{k+1}(x)^p\le C(\delta,s,p,c_\mu)r_i^{sp}\left(M(g_k^q)(x)\right)^{p/q} + \varepsilon^{p/q} r_i^{sp}(M(h_k^q)(x))^{p/q}\,,
\end{equation}
where $\varepsilon>0$ and $\delta>0$ are as in the statement
of Lemma \ref{l.bigpiece} for $\alpha=sq$. We will choose these parameters later.
Clearly, we may assume that $u_{k+1}(x)\not=0$.
Fix $m\in\N_0$ such that
$2^{-m}r\le \dist(x_i,F)<2^{-m+1}r$. Write $B_i^*=B(x_i,2^{-m+2}r)$. Observe
that $x\in B_i\subset B_i^*$.

 There are two cases:

{\bf 1. $(u_k)_{B_i^*}<1-\delta$.} Since $u_{k+1}(x)\not=0$ and $x\in B(z,2r)$, we have
$x\in E_{k+1}$ and $u_k(x)=1$. Therefore
\begin{equation}\label{e.p1}
1=\frac{1}{\delta}(1-(1-\delta))<\frac{1}{\delta}(u_k(x)-(u_k)_{B_i^*})=\frac{1}{\delta}\lvert u_k(x)-(u_k)_{B_i^*}\rvert\,.
\end{equation}
Denote  $B'_j=B(x,2^{-j-m}r)$ for each $j\in\N_0$. We estimate
\begin{align*}
\lvert u_k(x)-(u_k)_{B_i^*}\rvert\le \lvert u_k(x)-(u_k)_{B'_0}\rvert + \lvert (u_k)_{B'_0}-(u_k)_{B_i^*}\rvert.
\end{align*}
Observe that $x\in B_j'\subset B_0'\subset B_i^*\subset B(z,8r)$ for all $j\in\N_0$.
Hence, by the $(s,1,q,p)$-Poincar\'e inequality \cite[Lemma 2.2]{DLV2021}, we obtain
\begin{equation}\label{e.p21}
\begin{split}
\lvert (u_k)_{B'_0}-(u_k)_{B_i^*}\rvert &\le \vint_{B'_0} \lvert u_k(y)-(u_k)_{B_i^*}\rvert\,dy 
\le c_\mu^3\vint_{B_i^*} \lvert u_k(y)-(u_k)_{B_{i}^*}\rvert\,dy \\
&\le C(s,p,c_\mu)(2^{-m+2}r)^s \biggl(\vint_{B_{i}^*}\biggl(\int_{B_{i}^*} \frac{\lvert u_k(y)-u_k(w)\rvert^p}{d(y,w)^{sp}\, \mu(B(y,d(y,w)))}\,dw\,\biggr)^{q/p}dy\biggr)^{1/q}\\
&\le C(s,p,c_\mu)(2^{-m+2}r)^s \biggl(\vint_{B_{i}^*}g_k(y)^q\,dy\biggr)^{1/q}\\
&\le C(s,p,c_\mu)r_i^s (M(g_k^q)(x))^{1/q}\,.
\end{split}
\end{equation}
Here $M(g_k^q)$ is the uncentered maximal function of $g_k^q$.
Furthermore, 
 \begin{equation}\label{e.p2}
\begin{split}
&\lvert u_k(x)-(u_k)_{B'_0}\rvert=\lim_{j\to\infty} \lvert (u_k)_{B'_{j+1}}-(u_k)_{B'_0}\rvert
\le \sum_{j=0}^\infty \lvert (u_k)_{B'_{j+1}}-(u_k)_{B'_{j}}\rvert\\
&\quad \le \sum_{j=0}^\infty  c_\mu\vint_{B'_j} \lvert u_k(y)-(u_k)_{B'_{j}}\rvert\,dy \\
&\quad\le C(s,p,c_\mu)\sum_{j=0}^\infty (2^{-j-m}r)^s \biggl(\vint_{B'_{j}}\biggl(\int_{B'_{j}} \frac{\lvert u_k(y)-u_k(w)\rvert^p}{d(y,w)^{sp}\, \mu(B(y,d(y,w)))}\,dw\,\biggr)^{q/p}dy\biggr)^{1/q}\\
&\quad\le C(s,p,c_\mu) r_i^s \sum_{j=0}^\infty 2^{-js} \left(\vint_{B_j'} g_k(y)^q\,dy\right)^{1/q}\\
&\quad\le  C(s,p,c_\mu) r_i^s (M(g_k^q)(x))^{1/q}\sum_{j=0}^\infty 2^{-js} \le C(s,p,c_\mu) r_i^s (M(g_k^q)(x))^{1/q}\,.
\end{split}
\end{equation}
Hence, from \eqref{e.p1}, \eqref{e.p21} and \eqref{e.p2}, we can conclude that
\[
u_{k+1}(x)^p\le 1\le C(\delta,s,p,c_\mu)r_i^{sp}\left(M(g_k^q)(x)\right)^{p/q}
\]
and \eqref{e.des} follows in the first case.

{\bf 2. $(u_k)_{B_i^*}\ge 1-\delta$.} We have $B_i^*\subset B(z,8r)$ and $u_k=0$ on $B(z,8r)\setminus E_k$, and therefore
\[
1-\delta\le (u_k)_{B_i*}= \vint_{B_i^*} u_k(y)\,dy\le \vint_{B_i^*} \mathbf{1}_{E_k}(y)\,dy\,.
\]
This gives
\[
\vint_{B_i^*} \mathbf{1}_{X\setminus E_k}(y)\,dy=1-\vint_{B_i^*} \mathbf{1}_{E_k}(y)\,dy\le 1-(1-\delta)=\delta\,.
\]
Recall that $\varepsilon$ and $\delta$ satisfy the assumption of Lemma \ref{l.bigpiece} with $\alpha=sq$. Since $u_{k-1}=1$ on $E_k$ and $x\in B_i^*\subset B(z,8r)$, we have
\begin{align*}
1&\le \varepsilon (2^{-m-1}r)^{sq}\vint_{B_i^*} \frac{\mathbf{1}_{E_k}(y)}{\dist(y,F)^{sq}}\,dy
\\&\le \varepsilon r_i^{sq}\vint_{B_i^*} \mathbf{1}_{B(z,8r)\setminus F}(y)\frac{u_{k-1}(y)^q}{\dist(y,F)^{sq}}\,dy\le \varepsilon r_i^{sq}M(h_k^q)(x)\,.
\end{align*}
By raising both sides to power $p/q>1$, we get
\[
u_{k+1}(x)^p\le 1\le \varepsilon^{p/q} r_i^{sp}(M(h_k^q)(x))^{p/q}
\]
and \eqref{e.des} follows also in the second case.

By combining inequalities \eqref{e.2b} and \eqref{e.des}, and since $\sum_{i=1}^\infty\mathbf{1}_{B_i}\le C(c_\mu)$, we get
\begin{align*}
&\int_{B(z,2r)\setminus F}  \frac{u_{k+1}(x)^p}{\dist(x,F)^{sp}}\,dx\le \sum_{i=1}^\infty r_i^{-sp} \int_{B(z,2r)\cap B_i} u_{k+1}(x)^p\,dx\\
&\quad\le \sum_{i=1}^\infty r_i^{-sp}\int_{B(z,2r)\cap B_i} C(\delta,s,p,c_\mu)r_i^{sp}\left(M(g_k^q)(x)\right)^{p/q}\,dx  \\
&\qquad\qquad\qquad\qquad   + \sum_{i=1}^\infty r_i^{-sp}\int_{B(z,2r)\cap B_i}\varepsilon^{p/q} r_i^{sp}(M(h_k^q)(x))^{p/q}\,dx\\
&\quad\le C(\delta,s,p,c_\mu)\sum_{i=1}^\infty \int_{B(z,2r)\cap B_i} \left(M(g_k^q)(x)\right)^{p/q}\,dx  \\
&\qquad\qquad\qquad\qquad   + \varepsilon^{p/q}\sum_{i=1}^\infty \int_{B(z,2r)\cap B_i} (M(h_k^q)(x))^{p/q}\,dx\\
&\quad\le C(\delta,s,p,c_\mu)\int_{X} \left(M(g_k^q)(x)\right)^{p/q}\,dx  \\
&\qquad\qquad\qquad\qquad   + C(c_\mu)\varepsilon^{p/q}\int_{X} (M(h_k^q)(x))^{p/q}\,dx\,.
\end{align*} 
Since $p/q>1$, by the Hardy--Littlewood maximal function theorem \cite[Theorem 3.13]{MR2867756}, 
\begin{align*}
\int_{B(z,2r)\setminus F}  \frac{u_{k+1}(x)^p}{\dist(x,F)^{sp}}\,dx &\le C_3\int_{X} g_k(x)^p\,dx 
 + C_4\varepsilon^{p/q}\int_{X} h_k(x)^{p}\,dx\\
 &= C_3\int_{B(z,8r)} g_k(x)^p\,dx+C_4\varepsilon^{p/q}\int_{B(z,8r)\setminus F} \frac{u_{k-1}(x)^p}{\dist(x,F)^{sp}}\,dx\,.
\end{align*}
where $C_3=C(\delta,s,p,q,c_\mu)$ and $C_4=C(p,q,c_\mu)$. 
Combining this estimate with Lemma \ref{l.absorb} yields
\begin{align*}
\int_{B(z,8r)\setminus F}  \frac{u_{k+1}(x)^p}{\dist(x,F)^{sp}}\,dx &\le 
C_1\int_{B(z,8r)} g_{k+1}(x)^p\,dx+
C_2\int_{B(z,2r)\setminus F}  \frac{u_{k+1}(x)^p}{\dist(x,F)^{sp}}\,dx
\\
&\le 
  C_1\int_{B(z,8r)} g_{k+1}(x)^p\,dx+C_2C_3\int_{B(z,8r)} g_k(x)^p\,dx\\
  &\qquad\qquad \qquad +C_2C_4\varepsilon^{p/q}\int_{B(z,8r)\setminus F} \frac{u_{k-1}(x)^p}{\dist(x,F)^{sp}}\,dx\,,
\end{align*}
where $C_1=C(s,p,c_\mu)$ and $C_2=C(s,p,c_\mu)$.
Multiplying both sides by $2^{(k+3)p}$ and summing the resulting inequalities gives
\begin{equation}\label{e.basicsum}
\begin{split}
&\sum_{k\in \Z} 2^{(k+3)p}\int_{B(z,8r)\setminus F}  \frac{u_{k+1}(x)^p}{\dist(x,F)^{sp}}\,dx\\
&\qquad \qquad \le 
  C_1\sum_{k\in \Z} 2^{(k+3)p}\int_{B(z,8r)} g_{k+1}(x)^p\,dx+C_2C_3\sum_{k\in \Z} 2^{(k+3)p}\int_{B(z,8r)} g_k(x)^p\,dx\\
  &\qquad\qquad \qquad +C_2C_4\varepsilon^{p/q}\sum_{k\in \Z} 2^{(k+3)p}\int_{B(z,8r)\setminus F} \frac{u_{k-1}(x)^p}{\dist(x,F)^{sp}}\,dx\,.
  \end{split}
  \end{equation}
We estimate the three terms in the right-hand side.
First, by Lemma \ref{l.mazya}, we have
\begin{align*}
&\sum_{k\in \Z} 2^{(k+3)p}\int_{B(z,8r)} g_{k+1}(x)^p\,dx
\\&\quad \le 2^{2p}\sum_{k\in\Z}2^{(k+1)p}\int_{B(z,8r)}\int_{B(z,8r)}\frac{\lvert u_{k+1}(x)-u_{k+1}(y)\rvert^p}{d(x,y)^{sp}\, \mu(B(x,d(x,y)))}\,dy\,dx \\
&\quad \le C(p)\int_{B(z,8r)}\int_{B(z,8r)}\frac{\lvert u(x)-u(y)\rvert^p}{d(x,y)^{sp}\, \mu(B(x,d(x,y)))}\,dy\,dx\,.
\end{align*}
A similar argument shows that
\begin{align*}
&\sum_{k\in \Z} 2^{(k+3)p}\int_{B(z,8r)} g_k(x)^p\,dx
\\&\quad \le 2^{3p}\sum_{k\in\Z}2^{kp}\int_{B(z,8r)}\int_{B(z,8r)}\frac{\lvert u_{k}(x)-u_{k}(y)\rvert^p}{d(x,y)^{sp}\, \mu(B(x,d(x,y)))}\,dy\,dx \\
&\quad \le C(p)\int_{B(z,8r)}\int_{B(z,8r)}\frac{\lvert u(x)-u(y)\rvert^p}{d(x,y)^{sp}\, \mu(B(x,d(x,y)))}\,dy\,dx\,.
\end{align*}
We choose $\varepsilon>0$ such that $C_2C_42^{2p}\varepsilon^{p/q}\le 1/2$. 
By changing the summation variable, we see that the last term in \eqref{e.basicsum} can be estimated from above by
\begin{align*}
&C_2C_42^{2p}\varepsilon^{p/q}\sum_{k\in \Z} 2^{(k+1)p}\int_{B(z,8r)\setminus F} \frac{u_{k-1}(x)^p}{\dist(x,F)^{sp}}\,dx
\le \frac{1}{2}\sum_{k\in \Z} 2^{(k+3)p}\int_{B(z,8r)\setminus F} \frac{u_{k+1}(x)^p}{\dist(x,F)^{sp}}\,dx\,.
\end{align*}
Hence, by  absorbing the last term in \eqref{e.basicsum} to the left-hand side and using \eqref{e.distcap} gives the desired inequality.
We remark that the absorbed term is finite, since $u$ is bounded on $B(z,8r)$ and $u=0$ in $F_\tau$ for some
$\tau>0$.
\end{proof}

\section{Proof of the main result}\label{e.char}

Here we prove our main result, which is Theorem \ref{t.main}. We begin with two lemmata.

\begin{lemma}\label{e.easy}
Let $\alpha>0$.
Suppose that a set $E\subset X$ is such that $\ucodima(E)<\alpha$.
Then $E$ satisfies the upper Aikawa condition with exponent $\alpha$.
\end{lemma}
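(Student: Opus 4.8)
The plan is to extract a single favourable scale: using $\ucodima(E)<\alpha$, the neighbourhood $E_\rho$ of $E$ already fills a definite fraction of every ball $B(z,R)$ as soon as $\rho$ is a fixed (small) multiple of $R$, and on the portion of $B(z,R)$ lying in $E_\rho$ the weight $\dist(\cdot,E)^{-\alpha}$ is bounded below by $\rho^{-\alpha}$, which is large compared with $R^{-\alpha}$ precisely because $\alpha$ exceeds the codimension. So the argument is a one-scale lower bound, not an iteration.

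Concretely, I would first fix, by the definition of the infimum defining $\ucodima(E)$, an exponent $Q$ with $\ucodima(E)<Q<\alpha$ and a constant $c>0$ such that $\mu(E_r\cap B(z,R))\ge c(r/R)^Q\mu(B(z,R))$ for all $z\in E$ and $0<r<R<\diam(E)$ (with $\diam(X)$ replacing $\diam(E)$ when $\diam(E)=0$); the point is that $\alpha-Q>0$. Given $\varepsilon>0$, I then choose parameters in two stages: since $\beta^{Q-\alpha}\to\infty$ as $\beta\to 0^+$, pick $\beta\in(0,1)$ with $c\beta^{Q-\alpha}\ge 2/\varepsilon$, and then pick $\delta\in(0,1)$ with $\delta\beta^{-\alpha}\le 1/\varepsilon$. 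Now let $z\in E$, $0<R<\diam(E)$, and let $K\subset X$ be Borel with $\mu(B(z,R)\setminus K)\le\delta\mu(B(z,R))$, and set $\rho=\beta R\in(0,R)$. The codimension bound gives $\mu(E_\rho\cap B(z,R))\ge c\beta^Q\mu(B(z,R))$, hence
\[
\mu\bigl(K\cap E_\rho\cap B(z,R)\bigr)\ge\mu\bigl(E_\rho\cap B(z,R)\bigr)-\mu\bigl(B(z,R)\setminus K\bigr)\ge(c\beta^Q-\delta)\mu(B(z,R)).
\]
Since $\dist(y,E)<\rho=\beta R$ for $y\in E_\rho$, one has $\dist(y,E)^{-\alpha}>\beta^{-\alpha}R^{-\alpha}$ there, and restricting the integral to $K\cap E_\rho\cap B(z,R)$ yields
\[
\vint_{B(z,R)}\frac{\mathbf{1}_K(y)}{\dist(y,E)^{\alpha}}\,dy\ge\beta^{-\alpha}R^{-\alpha}(c\beta^Q-\delta)=\bigl(c\beta^{Q-\alpha}-\delta\beta^{-\alpha}\bigr)R^{-\alpha}\ge\frac{R^{-\alpha}}{\varepsilon},
\]
which is exactly the upper Aikawa condition with exponent $\alpha$; the case $\diam(E)=0$ is identical with $\diam(X)$ in place of $\diam(E)$.

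I do not expect a genuine obstacle here; the only thing requiring care is the order of the quantifiers: $Q$, $c$, and the gap $\alpha-Q$ must be fixed before $\varepsilon$ is seen, $\beta$ is then chosen from $\varepsilon$, and only afterwards is $\delta$ taken small enough; moreover the auxiliary radius $\rho$ must be chosen strictly less than $R$ so that the defining inequality for $\ucodima(E)$ is applicable.
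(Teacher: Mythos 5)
Your argument is correct and follows essentially the same one-scale approach as the paper: choose an exponent $Q<\alpha$ for which the codimension lower bound \eqref{e.uascd} holds, fix an auxiliary radius $r=\theta R$ (your $\rho=\beta R$) depending on $\varepsilon$, use the resulting lower bound on $\mu(E_r\cap B(z,R))$, absorb the $\delta$-loss from removing $B(z,R)\setminus K$, and then lower-bound the weight $\dist(\cdot,E)^{-\alpha}$ by $r^{-\alpha}$ on $E_r$. The only cosmetic difference is the bookkeeping for $\delta$ (the paper takes $\delta=\tfrac{c}{2}\theta^{Q}$, while you impose $\delta\beta^{-\alpha}\le 1/\varepsilon$); both choices close the estimate in the same way.
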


\begin{proof}
By definition of the Assouad codimension, we see that \eqref{e.uascd} holds for some exponent $Q<\alpha$ and a constant $c>0$. Let $\varepsilon>0$.
Fix $0<\theta<1$ so small that 
\[
\frac{1}{\varepsilon}\le \frac{c}{2}\theta^{Q-\alpha}
\]
Choose $\delta= \frac{c}{2}\theta^{Q}>0$.
Fix either $0<R<\diam(E)$ if $\diam(E)>0$, or $0<R<\diam(X)$ if $\diam(E)=0$.
Fix $z\in E$  and a Borel set $K\subset X$ such that
\[
\mu(B(z,R)\setminus K)\le \delta\mu(B(z,R))\,.
\]
Let $0<r=\theta R<R$, so that $\theta=r/R$.
By using \eqref{e.uascd}, we get
\begin{align*}
c\Bigl(\frac{r}{R}\Bigr)^Q&\le \vint_{B(z,R)} \mathbf{1}_{E_r}(y)\,dy\\
&=\vint_{B(z,R)} \mathbf{1}_{E_r\cap K}(y)\,dy+\vint_{B(z,R)} \mathbf{1}_{E_r\setminus K}(y)\,dy\\
&\le \vint_{B(z,R)} \mathbf{1}_{E_r\cap K}(y)\,dy+\vint_{B(z,R)} \mathbf{1}_{B(z,R)\setminus K}(y)\,dy\le \vint_{B(z,R)} \mathbf{1}_{E_r\cap K}(y)\,dy+\delta\,.
\end{align*}
Recall that
\[
\delta=\frac{c}{2}\theta^{Q}=\frac{c}{2}\left(\frac{r}{R}\right)^{Q}\,.
\]
Hence, absorbing $\delta$ to the left-hand side and using the estimate
$\dist(y,E)< r$ for $y\in E_r$ yields
\begin{align*}
\frac{c}{2}\Bigl(\frac{r}{R}\Bigr)^Q&\le \vint_{B(z,R)} \mathbf{1}_{E_r\cap K}(y)\,dy\\
&\le
r^{\alpha}\vint_{B(z,R)} \frac{\mathbf{1}_{E_r\cap K}(y)}{\dist(y,E)^{\alpha}}\,dy\le \left(\frac{r}{R}\right)^{\alpha} R^\alpha\vint_{B(z,R)} \frac{\mathbf{1}_{K}(y)}{\dist(y,E)^{\alpha}}\,dy
\end{align*}
All in all, we get
\begin{align*}
\frac{1}{\varepsilon}\le \frac{c}{2}\theta^{Q-\alpha}=\frac{c}{2}\Bigl(\frac{r}{R}\Bigr)^{Q-\alpha}\le R^\alpha \vint_{B(z,R)} \frac{\mathbf{1}_{K}(y)}{\dist(y,E)^{\alpha}}\,dy\,,
\end{align*}
which shows that $E$ satisfies the upper Aikawa condition with exponent $\alpha$.
\end{proof}

%
%

\begin{lemma}\label{t.difficult_converse}
Let
$1 <q<p<\infty$ and $0<s<1$. Assume that a  set $E\subset X$ satisfies the upper Aikawa condition with exponent 
$\alpha=sq$.
 Then
$\ucodima(E)\le sp$.
\end{lemma}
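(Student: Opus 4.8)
The plan is to apply the local fractional Hardy inequality of Theorem~\ref{p.maz'ya_local} to one explicitly chosen cut-off function and to read off, from the resulting two-sided estimate, the density bound \eqref{e.uascd} that defines the upper Assouad codimension, for the exponent $Q=sp$. First, though, I would reduce to the case that $E$ is closed: since $\dist(\cdot,E)=\dist(\cdot,\overline{E})$, $\diam(E)=\diam(\overline{E})$ and $E_r=\overline{E}_r$, replacing $E$ by $\overline{E}$ changes neither $\ucodima(E)$ nor the upper Aikawa condition; the only point needing an argument is that the defining inequality of the upper Aikawa condition, a priori known only for $z\in E$, then also holds for $z\in\overline{E}$, which follows by approximating such a $z$ by points $z_n\in E$, applying the condition on the slightly smaller balls $B(z_n,R-d(z,z_n))\subset B(z,R)$, comparing measures via the doubling property \eqref{e.doubling}, and letting $n\to\infty$.

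Assuming $E$ closed, I would fix $z\in E$ and $0<r<R<\diam(E)$ (with $\diam(X)$ in place of $\diam(E)$ when $\diam(E)=0$) and apply Lemma~\ref{l.truncation} with centre $z$ and radius $R/8$ to obtain a closed set $F$ with $z\in F\subset E\cap\overline{B(z,R/8)}$. Since $8\cdot\tfrac R8=R$ and the required exponent $\alpha=sq$ is exactly our hypothesis, Theorem~\ref{p.maz'ya_local} applies with this $F$; I would test it with the $\tfrac1r$-Lipschitz function $u(x)=\min\{1,\dist(x,F)/r\}$, which vanishes on $F$, takes values in $[0,1]$, and equals $1$ outside the neighbourhood $F_r$. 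This produces a constant $C$ independent of $z$ and $r$ with $\int_{B(z,R)\setminus F}\frac{|u(x)|^p}{\dist(x,F)^{sp}}\,dx\le C\int_{B(z,R)}\int_{B(z,R)}\frac{|u(x)-u(y)|^p}{d(x,y)^{sp}\,\mu(B(x,d(x,y)))}\,dy\,dx$.

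Next I would estimate the two sides. For the left-hand side: $z\in F$ gives $\dist(x,F)\le d(x,z)<R$ on $B(z,R)$, while $|u|=1$ wherever $\dist(\cdot,F)\ge r$, so the left-hand side is at least $R^{-sp}\mu(B(z,R)\setminus F_r)$. For the right-hand side: the integrand vanishes unless $x$ or $y$ lies in $F_r$, and $|u(x)-u(y)|\le\min\{1,d(x,y)/r\}$; a dyadic-annulus computation, summing over the annuli of $B(x,r)$ (convergent because $p(1-s)>0$) and of $X\setminus B(x,r)$ (convergent because $sp>0$) and using \eqref{e.doubling}, bounds $\int_X\frac{(\min\{1,d(x,y)/r\})^p}{d(x,y)^{sp}\,\mu(B(x,d(x,y)))}\,dy$ by $C(s,p,c_\mu)\,r^{-sp}$ uniformly in $x$; after symmetrising the kernel by the doubling comparison of $\mu(B(x,d(x,y)))$ with $\mu(B(y,d(x,y)))$, the right-hand side is at most $C'r^{-sp}\mu(F_r\cap B(z,R))$ with $C'$ independent of $z,r,R$. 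Hence $R^{-sp}\mu(B(z,R)\setminus F_r)\le C'r^{-sp}\mu(F_r\cap B(z,R))$.

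Finally I would rearrange: writing $\mu(B(z,R))=\mu(F_r\cap B(z,R))+\mu(B(z,R)\setminus F_r)$ and using $(R/r)^{sp}\ge1$, the last inequality yields $\mu(B(z,R))\le(1+C')(R/r)^{sp}\mu(F_r\cap B(z,R))$, and since $F\subset E$ forces $F_r\subset E_r$, this reads $\mu(E_r\cap B(z,R))\ge(1+C')^{-1}(r/R)^{sp}\mu(B(z,R))$. As $z\in E$ and $0<r<R<\diam(E)$ were arbitrary, this is precisely \eqref{e.uascd} with $Q=sp$ and $c=(1+C')^{-1}$, whence $\ucodima(E)\le sp$. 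I expect the right-hand side estimate to be the main obstacle: one must verify that the fractional $(s,p)$-energy of the single cut-off $u$ over $B(z,R)$ is controlled by $r^{-sp}\mu(F_r\cap B(z,R))$, which is where the splitting at scale $r$ and the (mild) non-symmetry of the kernel $\mu(B(x,d(x,y)))^{-1}$ enter; the reduction to closed $E$ and the left-hand side bound are comparatively routine.
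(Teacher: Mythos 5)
Your proof is correct and follows the same overall strategy as the paper: reduce to the closed set case by transferring the upper Aikawa condition to $\iol{E}$, truncate via Lemma~\ref{l.truncation} at scale $R/8$, and test the local fractional Hardy inequality of Theorem~\ref{p.maz'ya_local} with an explicit cut-off supported near the set. There are three small but genuine departures worth noting. First, your test function is $u=\min\{1,\dist(\cdot,F)/r\}$ (distance to the truncated set), whereas the paper uses $u=\min\{1,4r^{-1}\dist(\cdot,E)\}$ (distance to $E$); both vanish on $F$ and equal $1$ at distance $\ge r$, but your choice localizes the energy to $F_r$ directly and lets the final rearrangement pass through $\mu(F_r\cap B(z,R))\le\mu(E_r\cap B(z,R))$. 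Second, you avoid the paper's preliminary case split $\mu(E_r\cap B(z,R))\gtrless\tfrac12\mu(B(z,R))$: because you write $\mu(B(z,R))=\mu(F_r\cap B(z,R))+\mu(B(z,R)\setminus F_r)$ and use $(R/r)^{sp}\ge1$, the desired density bound drops out in one step. Third, and most substantively, you supply the dyadic-annulus estimate $\int_X \bigl(\min\{1,d(x,y)/r\}\bigr)^p\,d(x,y)^{-sp}\mu(B(x,d(x,y)))^{-1}\,dy\le C(s,p,c_\mu)\,r^{-sp}$ together with the doubling comparison $\mu(B(y,d(x,y)))\le c_\mu\,\mu(B(x,d(x,y)))$ needed to symmetrise the kernel, whereas the paper leaves the corresponding estimate \eqref{eq.rhs_upper} to a citation of \cite[Theorem~5.3]{DLV2021}. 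Your version is therefore self-contained where the paper's is not; the convergence of both geometric series (inner because $p(1-s)>0$, outer because $sp>0$) and the bookkeeping in the closure reduction (choosing $w\in B(z,R/2)\cap E$ rather than your limiting sequence is slightly simpler, but either works with the doubling adjustments you indicate) all check out.
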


\begin{proof}
We first show that the closure $\iol{E}$ satisfies
the upper Aikawa condition with exponent $\alpha=sq$.
This is clear if $\diam(E)=0$, so we can
assume that $\diam(E)>0$.
We let $\varepsilon>0$
and denote $\varepsilon'=\varepsilon/c_\mu^2>0$.
Let $\delta'>0$ be such that $E$ satisfies
the upper Aikawa condition with the pair $\varepsilon'$ and $\delta'$.
Choose $\delta=\delta' / c_\mu^2$.
 Let $z\in\iol{E}$, $0<R<\diam(\iol{E})$ and assume
that a Borel set $K\subset X$ satisfies the condition
\[
\mu(B(z,R)\setminus K)\le \delta\mu(B(z,R))\,.
\]
Fix a point $w\in B(z,R/2)\cap E$. Then $w\in E$, $0<R/2<\diam(E)$  and
\begin{align*}
\mu(B(w,R/2)\setminus K)&\le \mu(B(z,R)\setminus K)\le
\delta\mu(B(z,R))\\&\le c_\mu^2\delta\mu(B(w,R/2))=\delta'\mu(B(w,R/2))\,.
\end{align*}
Therefore
\[
R^{-\alpha}\le (R/2)^{-\alpha}\le \varepsilon'\vint_{B(w,R/2)}\frac{\mathbf{1}_K(y)}{\dist(y,E)^\alpha}\,dy
\le \varepsilon' c_\mu^2\vint_{B(z,R)}\frac{\mathbf{1}_K(y)}{\dist(y,\iol{E})^\alpha}\,dy
\]
Since $\varepsilon'c_\mu^2=\varepsilon$, the closure $\iol{E}$ satisfies the upper Aikawa condition
with exponent $\alpha=sq$.

Next we adapt the proof of \cite[Theorem 5.3]{DLV2021}.
Fix $z\in E\subset \iol{E}$. We also fix either $0<r<R<\diam(E)$
if $\diam(E)>0$, or $0<r<R<\diam(X)$ if $\diam(E)=0$.
It suffices to show that
\begin{equation}\label{eq.codim_est_easy}
\frac{\mu(E_r\cap B(z,R))}{\mu(B(z,R))}\ge c\Bigl(\frac{r}{R}\Bigr)^{sp},
\end{equation}
where  the constant $c>0$ is independent of $z$, $r$ and $R$.

If $\mu(E_r\cap B(z,R))\ge \frac 1 2 \mu(B(z,R))$, the claim is clear since $\bigl(\frac{r}{R}\bigr)^{sp}\le 1$. Thus we may assume in the sequel that
$\mu(E_r\cap B(z,R))< \tfrac 1 2 \mu(B(z,R))$, whence
\begin{equation}\label{eq.compl_meas_easy}
\mu(B(z,R)\setminus E_r) \ge \tfrac 1 2 \mu(B(z,R))>0.
\end{equation}

We define a  Lipschitz function $u\in \Lip(X)$ by 
\[
u(x)=\min\{1,4r^{-1}\dist(x,E)\}\,,\qquad  x\in X.
\]
Then $u=0$ in $\iol{E}$.
Since $u=1$ in $X\setminus E_{r}$, we obtain
\begin{equation}\label{eq.lhs_low_easy}
\begin{split}
R^{-sp}\int_{B(z,R)} \lvert u(x)\rvert^p\,dx &\ge 
R^{-sp}\int_{B(z,R)\setminus E_{r}} \lvert u(x)\rvert^p\,dx
\\&= 
R^{-sp}\mu(B(z,R)\setminus E_{r})
\ge \tfrac 1 2 R^{-sp}\mu(B(z,R)),
\end{split}
\end{equation}
where the last step follows from~\eqref{eq.compl_meas_easy}.

Assume that $F\subset \iol{E}$ is as in Lemma \ref{l.truncation} 
applied with the closed set $\iol{E}$ and the ball $B(z,R/8)$. Then $z\in F$ and $u=0$ in $F$. Using inequalities \eqref{eq.lhs_low_easy}  and 
Theorem \ref{p.maz'ya_local} with the closed set $\iol{E}$, we get
\begin{equation}\label{eq.rhs_upper_easy}
\begin{split}
C_1\int_{B(z,R)} \int_{B(z,R)} \frac{\lvert u(x)-u(y)\rvert^p}{d(x,y)^{sp}\, \mu(B(x,d(x,y)))}\,dy\,dx
&\ge
2\int_{B(z,R)\setminus F}  \frac{\lvert u(x)\rvert^p}{\dist(x,F)^{sp}}\,dx 
\\&\ge
2 R^{-sp}\int_{B(z,R)} \lvert u(x)\rvert^p\,dx\\&\ge R^{-sp}\mu(B(z,R))\,,
\end{split}
\end{equation}
where the constant $C_1$ is independent of $z$, $r$ and $R$.
To prove the claim~\eqref{eq.codim_est_easy}, it hence suffices to show that
\begin{equation}\label{eq.rhs_upper}
\int_{B(z,R)} \int_{B(z,R)} \frac{\lvert u(x)-u(y)\rvert^p}{d(x,y)^{sp}\, \mu(B(x,d(x,y)))}\,dy\,dx
\le C_2 r^{-sp}\mu(E_r\cap B(z,R))\,,
\end{equation}
where the constant $C_2$ is independent of $z$, $r$ and $R$.
This estimate follows by arguing as in the proof of \cite[Theorem 5.3]{DLV2021}; we omit the somewhat
lengthy argument.
\end{proof}

\begin{theorem}\label{t.main}
Let $E\subset X$ be a nonempty set. Then $\ucodima(E)$
is the infimum of all $\alpha>0$ for which $E$ satisfies
the upper Aikawa condition with exponent $\alpha$.
\end{theorem}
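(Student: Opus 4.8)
The plan is to prove the two inequalities $\ucodima(E) \le \inf A$ and $\ucodima(E) \ge \inf A$ separately, where $A$ denotes the set of exponents $\alpha > 0$ for which $E$ satisfies the upper Aikawa condition with exponent $\alpha$. Throughout, recall that the upper Aikawa condition and the upper Assouad codimension depend only on the closure of $E$, since $E_r = \iol{E}_r$ and since an $\eps$--$\delta$ rescaling argument (as carried out at the start of the proof of Lemma~\ref{t.difficult_converse}) transfers the Aikawa condition from $E$ to $\iol{E}$; so we may freely replace $E$ by $\iol{E}$ when convenient.

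First I would prove $\inf A \le \ucodima(E)$, i.e.\ that every $\alpha > \ucodima(E)$ lies in $A$. This is exactly the content of Lemma~\ref{e.easy}: if $\alpha > \ucodima(E)$, then $\ucodima(E) < \alpha$ and Lemma~\ref{e.easy} gives that $E$ satisfies the upper Aikawa condition with exponent $\alpha$, so $\alpha \in A$. Taking the infimum over such $\alpha$ yields $\inf A \le \ucodima(E)$.

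Next I would prove the reverse inequality $\ucodima(E) \le \inf A$. Fix any $\alpha \in A$; it suffices to show $\ucodima(E) \le \alpha$, and then take the infimum. Since $\alpha > 0$, I can choose $s \in (0,1)$ and then $q \in (1, \infty)$ with $sq = \alpha$ — concretely, pick any $q > 1$ and set $s = \alpha/q$, shrinking $q$ toward $1$ if necessary to ensure $s < 1$ (this is possible precisely because $\alpha > 0$, making $s = \alpha/q \to 0$ as $q \to \infty$). Then choose $p \in (q, \infty)$; by Lemma~\ref{t.difficult_converse} applied with these $s, p, q$ (the hypothesis being that $E$ satisfies the upper Aikawa condition with exponent $\alpha = sq$, which holds since $\alpha \in A$), we conclude $\ucodima(E) \le sp$. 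Since $p > q$ can be taken arbitrarily close to $q$, we get $sp \to sq = \alpha$, hence $\ucodima(E) \le \alpha$. Therefore $\ucodima(E) \le \inf A$.

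Combining the two inequalities gives $\ucodima(E) = \inf A$, which is the assertion of the theorem. The main subtlety — really the only place requiring care — is the parameter juggling in the second part: one must verify that for every $\alpha > 0$ there exist admissible $s \in (0,1)$ and $1 < q < p < \infty$ with $sq = \alpha$ and $sp$ arbitrarily close to $\alpha$. Writing $q = 1 + \eta$ and $p = 1 + \eta'$ with $0 < \eta < \eta'$ small, we have $s = \alpha/(1+\eta) < 1$ once $\eta$ is small enough relative to $\alpha$ (indeed $\alpha/(1+\eta) < 1 \iff \eta > \alpha - 1$, which is automatic if $\alpha \le 1$ and otherwise forces a lower bound on $\eta$ — but then we instead keep $q$ large: take $q$ with $q > \alpha$ so that $s = \alpha/q < 1$, and $p$ slightly larger than $q$, giving $sp = \alpha p/q$ as close to $\alpha$ as desired). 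Either way the limit $sp \downarrow \alpha$ is achievable, and all the heavy analytic lifting has already been done in Lemmata~\ref{e.easy} and~\ref{t.difficult_converse}.
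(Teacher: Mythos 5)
Your proposal is correct and follows the paper's own proof essentially verbatim: both directions rest on Lemma~\ref{e.easy} (giving $\inf A \le \ucodima(E)$) and Lemma~\ref{t.difficult_converse} (giving $\ucodima(E) \le sp$ for every $p > q$, hence $\ucodima(E) \le \alpha$ by letting $p \downarrow q$). The only slip is the remark ``shrinking $q$ toward $1$ if necessary to ensure $s<1$'' --- one must \emph{enlarge} $q$ past $\alpha$ so that $s=\alpha/q<1$ --- but you catch and fix this at the end of the parenthetical, so the argument as a whole is sound.
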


\begin{proof}
Denote by $A$ the set of all $\alpha>0$ for which $E$ satisfies
the upper Aikawa condition with exponent $\alpha$. Lemma~\ref{e.easy} shows that
$A\not=\emptyset$, and more specifically $(\ucodima(E),\infty)\subset A$. Therefore $\inf A\le \ucodima(E)$. Conversely, 
assume that $\alpha\in A$ so that $E$ satisfies the upper Aikawa condition with exponent $\alpha>0$. Choose $1<q<\infty$ and $0<s<1$ such that $\alpha=sq$. Fix
$p>q$. Lemma~\ref{t.difficult_converse} implies that
$\ucodima(E)\le sp$. Since $p>q$ was arbitrary, it follows that
$\ucodima(E)\le sq=\alpha$. Since $\alpha\in A$ was arbitrary, we find that
$\ucodima(E)\le \inf A$.
\end{proof}

\def\cprime{$'$} \def\cprime{$'$} \def\cprime{$'$}


\begin{thebibliography}{10}

\bibitem{MR1143471}
H.~Aikawa.
\newblock Quasiadditivity of {R}iesz capacity.
\newblock {\em Math. Scand.}, 69(1):15--30, 1991.

\bibitem{MR1439503}
H.~Aikawa and M.~Ess\'{e}n.
\newblock {\em Potential theory---selected topics}, volume 1633 of {\em Lecture
  Notes in Mathematics}.
\newblock Springer-Verlag, Berlin, 1996.

\bibitem{MR2867756}
A.~Bj{\"o}rn and J.~Bj{\"o}rn.
\newblock {\em Nonlinear potential theory on metric spaces}, volume~17 of {\em
  EMS Tracts in Mathematics}.
\newblock European Mathematical Society (EMS), Z\"urich, 2011.

\bibitem{MR1869615}
J.~Bj{\"o}rn, P.~MacManus, and N.~Shanmugalingam.
\newblock Fat sets and pointwise boundary estimates for {$p$}-harmonic
  functions in metric spaces.
\newblock {\em J. Anal. Math.}, 85:339--369, 2001.

\bibitem{MR4454384}
B.~Dyda and M.~Kijaczko.
\newblock On density of compactly supported smooth functions in fractional
  {S}obolev spaces.
\newblock {\em Ann. Mat. Pura Appl. (4)}, 201(4):1855--1867, 2022.

\bibitem{DLV2021}
B.~Dyda, J.~Lehrb\"{a}ck, and A.~V. V\"{a}h\"{a}kangas.
\newblock {F}ractional {P}oincar\'e and localized {H}ardy inequalities on
  metric spaces.
\newblock {\em Advances in Calculus of Variations}, 16(4):867--884, 2023.

\bibitem{MR3237044}
B.~Dyda and A.~V. V\"{a}h\"{a}kangas.
\newblock A framework for fractional {H}ardy inequalities.
\newblock {\em Ann. Acad. Sci. Fenn. Math.}, 39(2):675--689, 2014.

\bibitem{MR3331699}
B.~Dyda and A.~V. V\"{a}h\"{a}kangas.
\newblock Characterizations for fractional {H}ardy inequality.
\newblock {\em Adv. Calc. Var.}, 8(2):173--182, 2015.

\bibitem{MR4411274}
J.~M. Fraser.
\newblock {\em Assouad dimension and fractal geometry}, volume 222 of {\em
  Cambridge Tracts in Mathematics}.
\newblock Cambridge University Press, Cambridge, 2021.

\bibitem{KaenmakiLehrbackVuorinen2013}
A.~K\"aenm\"aki, J.~Lehrb\"ack, and M.~Vuorinen.
\newblock Dimensions, {W}hitney covers, and tubular neighborhoods.
\newblock {\em Indiana Univ. Math. J.}, 62(6):1861--1889, 2013.

\bibitem{MR4306765}
J.~Kinnunen, J.~Lehrb\"{a}ck, and A.~V\"{a}h\"{a}kangas.
\newblock {\em Maximal function methods for {S}obolev spaces}, volume 257 of
  {\em Mathematical Surveys and Monographs}.
\newblock American Mathematical Society, Providence, RI, 2021.

\bibitem{MR3631460}
J.~Lehrb\"{a}ck.
\newblock Hardy inequalities and {A}ssouad dimensions.
\newblock {\em J. Anal. Math.}, 131:367--398, 2017.

\bibitem{MR4237248}
J.~Lehrb\"{a}ck.
\newblock Assouad type dimensions in geometric analysis.
\newblock In {\em Fractal geometry and stochastics {VI}}, volume~76 of {\em
  Progr. Probab.}, pages 25--46. Birkh\"{a}user/Springer, Cham, [2021]
  \copyright 2021.

\bibitem{MR3055588}
J.~Lehrb\"{a}ck and H.~Tuominen.
\newblock A note on the dimensions of {A}ssouad and {A}ikawa.
\newblock {\em J. Math. Soc. Japan}, 65(2):343--356, 2013.

\bibitem{MR3673660}
J.~Lehrb\"{a}ck, H.~Tuominen, and A.~V. V\"{a}h\"{a}kangas.
\newblock Self-improvement of uniform fatness revisited.
\newblock {\em Math. Ann.}, 368(3-4):1439--1464, 2017.

\bibitem{MR946438}
J.~L. Lewis.
\newblock Uniformly fat sets.
\newblock {\em Trans. Amer. Math. Soc.}, 308(1):177--196, 1988.

\bibitem{Mazya1985}
V.~G. Maz{\cprime}ya.
\newblock {\em Sobolev spaces}.
\newblock Springer Series in Soviet Mathematics. Springer-Verlag, Berlin, 1985.
\newblock Translated from the Russian by T. O. Shaposhnikova.

\bibitem{MR1386213}
P.~Mikkonen.
\newblock On the {W}olff potential and quasilinear elliptic equations involving
  measures.
\newblock {\em Ann. Acad. Sci. Fenn. Math. Diss.}, (104):71, 1996.

\bibitem{Muckenhoupt1974}
B.~Muckenhoupt.
\newblock The equivalence of two conditions for weight functions.
\newblock {\em Studia Mathematica}, 49(2):101--106, 1974.

\end{thebibliography}
\end{document}